\def\0{\emptyset}
\def\n{\noindent}
\def\0{\emptyset}
\def\n{\noindent}
\newtheorem{theorem}{Theorem}
\newtheorem{lemma}[theorem]{Lemma}
\newtheorem{claim}[theorem]{Claim}
\newtheorem{definition}[theorem]{Definition}
\newtheorem{observation}[theorem]{Observation}
\newtheorem{problem}[theorem]{Problem}
\newtheorem{remark}[theorem]{Remark}
\begin{document}
\title{Vertex degree sums for rainbow matchings in 3-uniform hypergraphs }
\thanks{Mei Lu was supported by the National Natural Science Foundation of China under Grant No. 12171272.
Yan Wang is supported by National Key R\&D Program of China under Grant No. 2022YFA1006400, National Natural Science Foundation of China under Grant No. 12201400, Explore X project of Shanghai Jiao Tong University and Shanghai Municipal Education Commission No. 2024AIYB003.
Yi Zhang is supported by Fundamental Research Funds for the Central Universities and Innovation Foundation of BUPT for Youth under Grant No. 2023RC49 and National Natural Science Foundation of China under Grant No. 11901048 and 12071002.}

\author{Haorui Liu}
\address{Department of Mathematical
Sciences, Tsinghua University, Beijing, 100084 }
\email{lhr22@mails.tsinghua.edu.cn}

\author{Mei Lu}
\address{Department of Mathematical
Sciences, Tsinghua University, Beijing, 100084}
\email{lumei@tsinghua.edu.cn}

\author{Yan Wang}
\address{School of Mathematical Sciences, CMA-Shanghai, Shanghai Jiao Tong
University, Shanghai 200240, China}
\email{yan.w@sjtu.edu.cn}

\author{Yi Zhang}
\address{ School of Science, Beijing University of Posts and Telecommunications, Beijing, 100876}
\email{shouwangmm@sina.com}
\date{\today}

\keywords{Matchings; Rainbow matchings; Uniform hypergraphs; Dirac's theorem; Ore's condition}
\begin{abstract}
Let $n \in 3\mathbb{Z}$ be sufficiently large. Zhang, Zhao and Lu proved that if $H$ is a 3-uniform hypergraph with  $n$ vertices  and no isolated vertices,  and if $\deg(u)+\deg(v) > \frac{2}{3}n^2 - \frac{8}{3}n + 2$ for any two vertices $u$ and $v$ that are contained in some edge of $H$, then $ H $ admits a perfect matching.  In this paper, we prove that the rainbow version of  Zhang, Zhao and Lu's result is asymptotically true. More specifically, let $\delta > 0$ and $ F_1, F_2, \dots, F_{n/3} $ be 3-uniform hypergraphs on a common set of $n$ vertices. For each $ i \in [n/3] $, suppose that $F_i$  has no isolated vertices and $\deg_{F_i}(u)+\deg_{F_i}(v) > \left( \frac{2}{3} + \delta \right)n^2$ holds for any two vertices $u$ and $v$ that are contained in some edge of $F_i$.  Then $ \{ F_1, F_2, \dots, F_{n/3} \} $ admits a rainbow matching. Note that this result is asymptotically tight.
\end{abstract}

\maketitle

\section{Introduction}
A $k$-uniform hypergraph (in short, $k$-graph) $H$ is a pair $(V, E)$, where $V = V (H)$ is a finite set of vertices and $E = E(H)$ is a family of $k$-element subsets of $V$. A matching of size $s$ in $H$ is a family of $s$ pairwise disjoint edges of $H$. If a matching covers all the vertices of $H$, then we call it a perfect matching. Given a set $S \subseteq V$, the degree $\deg_H(S)$ of $S$ is the number of edges of $H$ that contain $S$. We simply write $\deg(S)$ when $H$ is obvious from the context. If $S = \{u\}$, then we write $\deg(u)$ instead of $\deg(S)$. Let $\delta_l(H) = \min\{ \deg(S) : S \subseteq V (H), |S| = l\}$.  For $u \in V(H)$, let
$N_H(u) := \{e : e \subseteq V (H) \backslash \{u\}$ and $e \cup \{u\} \in E(H)\}$. When there is no confusion, we also view $N_H(u)$ as a hypergraph with vertex set $V (H) \backslash \{u\}$ and edge set $N_H(u)$.

Given integers $ l < k \leq n$ such that $k$ divides $n$, let $m_l(k, n)$ denote the smallest integer $m$ such that every $k$-graph $H$ on $n$ vertices with $\delta_l(H) \geq m$ contains a perfect matching. In recent years, the problem of determining $m_l(k, n)$ has received much attention (see \cite{Alon,Han,Han3,Kha1,Kha2,Kuhn1,Kuhn2,Mar,Pik,Rod1,Rod2,Rod3,TrZh12, TrZh13, TrZh16}). A well-known result of Ore \cite{Ore} extended Dirac's theorem by determining the smallest degree sum of two non-adjacent vertices that guarantees a Hamilton cycle in graphs. Ore-type results for hypergraphs witnessed much progress several years ago (see \cite{Tang,wang,Yi0,Yi1,Yi2,Yi3,Yi4,zhang,zhang2}).
%For example, Tang and Yan \cite{Tang} studied the degree sum of two $(k - 1)$-sets that guarantees a tight Hamilton cycle in $k$-graphs. Zhang and Lu \cite{Yi1} studied the degree sum of two $(k - 1)$-sets that guarantees a matching of size $s$ in $k$-graphs.

In a hypergraph, two distinct vertices are \emph{adjacent} if there exists an edge containing both of them. The following are three possible ways of defining the minimum degree sum of a 3-graph $H$.
Let $\sigma_2(H)=\min \{\deg(u)+\deg(v): u, v\in V(H)$ are adjacent$\}$, $\sigma_2'(H) = \min \{\deg(u)+\deg(v): u, v \in V(H)\}$, and $\sigma_2''(H) = \min \{\deg(u)+\deg(v): u, v\in V(H)$ are not adjacent$\}$.

The parameter $\sigma_2'$ is closely related to the Dirac threshold $m_1(3, n)$ -- Zhang Zhao and Lu \cite{zhang} claimed that, following the approach used  in \cite{Kuhn2}, they can prove that when $n$ is divisible by 3 and sufficiently large, every 3-graph $H$ on $n$ vertices with $\sigma_2'(H) \geq  2 (\binom{n-1}{2} - \binom{2n/3}{2})+1$ contains a perfect matching. On the other hand, a condition on $\sigma_2''$ alone does not necessarily ensure the existence of a perfect matching. In fact, consider a 3-graph  $H$ where the edge set consists of all triples containing a fixed vertex. Whatever $\sigma_2''(H)$ is,  $H$ contains no two disjoint edges, as any two vertices in $H$ are adjacent.

Zhang, Zhao and Lu \cite{zhang} determined the minimum degree sum of two adjacent vertices that guarantees a perfect matching in $3$-graphs without isolated vertices. Let us define (potential) extremal $3$-graphs for the matching problem. For $1\le \ell\le 3$, let $H^{\ell}_{n, s}$ denote the 3-graph of order $n$, whose vertex set is divided into two sets $S$ and $T$ of sizes $n- s\ell+1$ and $s\ell -1$, respectively, and whose edge set consists of all triples with at least $\ell$ vertices in $T$. A well-known conjecture of Erd\H{o}s ~\cite{Erd65} states that either $H_{n,s}^1$  or $H_{n,s}^3$  is the densest $3$-graph on $n$ vertices without a matching of size $s$, which is proved by Frankl \cite{Fra}. On the other hand, K\"{u}hn, Osthus and Treglown \cite{Kuhn2} showed that for sufficiently large $n$, $H_{n,s}^1$ has the largest minimum vertex degree among all $3$-graphs on $n$ vertices without a matching of size $s$. For degree sum condition, Zhang Zhao and Lu \cite{zhang} showed the following theorem.

\begin{theorem}\label{theorem1}\cite{zhang}
There exists $n_0 \in \mathbb{N}$ such that the following holds for all integers $n\ge n_0$ that are divisible by $3$. Let $H$ be a $3$-graph of order $n$ without an isolated vertex. If $\sigma_2(H) > \sigma_2(H^2_{n,n/3})= \frac{2}{3}n^2-\frac{8}{3}n+2$, then $H$ contains a perfect matching.
\end{theorem}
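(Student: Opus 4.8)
The plan is to carry out the absorption method in its rainbow (transversal) form. Write $m=n/3$. For a color $i\in[m]$, call a vertex $v$ \emph{heavy in $F_i$} if $\deg_{F_i}(v)>(\tfrac13+\tfrac\delta2)n^2$ and \emph{light in $F_i$} otherwise, and let $L_i$, $S_i$ be the corresponding vertex sets; the degree-sum hypothesis says precisely that no edge of $F_i$ contains two light vertices, i.e. every edge of $F_i$ meets $S_i$ in at most one vertex. The first step is to extract the structural consequences that the rest of the proof leans on. Since $F_i$ has no isolated vertex, a light $v$ lies in an edge whose other two vertices must be heavy; fixing such a heavy neighbour $a$ and using $\deg_{F_i}(v)\le\binom{|L_i|}{2}$, $\deg_{F_i}(a)\le\binom{|L_i|-1}{2}+|S_i|(|L_i|-1)$ (an edge at $a\in L_i$ either lies in $L_i$ or has exactly one endpoint in $S_i$), together with $|L_i|+|S_i|=n$ and $\deg_{F_i}(v)+\deg_{F_i}(a)>(\tfrac23+\delta)n^2$, one obtains $(|L_i|-1)(n-1)>(\tfrac23+\delta)n^2$ and hence
\[
|S_i|<\bigl(\tfrac13-\delta\bigr)n\qquad\text{for every }i\in[m].
\]
Moreover $\deg_{F_i}(a)\le\binom{n-1}{2}$ forces $\deg_{F_i}(v)>(\tfrac16+\delta)n^2$ for \emph{every} vertex $v$, so each $F_i$ is globally dense. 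The strict deficit $n/3-|S_i|=\Omega(\delta n)$ is the slack that makes the statement true, and it shows the threshold is asymptotically sharp: if every $F_i$ is a copy of $H^2_{n,n/3}$ on the common ground set, then $\sigma_2(F_i)=\tfrac23 n^2-\tfrac83 n+2$ but all the $F_i$ share a light set $S$ of size $n/3+1$, and since each edge of each $F_i$ covers at most one vertex of $S$, no $n/3$ edges can cover $S$.

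Second, I would establish a \emph{rainbow absorbing lemma}: for constants $0<\gamma\ll\mu\ll\delta$ there are a color set $I_0\subseteq[m]$ with $|I_0|\le\gamma n$ and a rainbow matching $\mathcal M_0$ with colors in $I_0$ covering a vertex set $A_0$, such that for every $W\subseteq V\setminus A_0$ with $|W|\le\mu n$ and $|W|\equiv 0\pmod 3$, and every $J\subseteq[m]\setminus I_0$ with $|J|=|W|/3$, the set $A_0\cup W$ has a rainbow matching using exactly the colors $I_0\cup J$. The construction follows the standard template: for each ``demand'' — say a pair $(v,c)$ of a vertex and a color — one produces a linear-size family of vertex- and color-disjoint \emph{absorbing gadgets} (short rainbow configurations that can be re-matched so as to additionally cover $v$ using color $c$), then selects, by a routine probabilistic deletion argument, a globally disjoint sub-family covering every demand $\Omega(n)$ times, and merges the chosen gadgets into $\mathcal M_0$. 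Density supplies the gadgets: the edge through $v$ in color $c$ is chosen among the $\deg_{F_c}(v)>(\tfrac16+\delta)n^2$ available, the remaining edges of a gadget among comparably many, and an upper bound on codegrees makes these choices essentially independent; light vertices cause no difficulty since they still have $\Omega(n^2)$ degree in every color.

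Third — and this is the crux — I would construct an \emph{almost perfect rainbow matching} on $V':=V\setminus A_0$ with the colors $[m]\setminus I_0$, covering all but at most $\mu n$ vertices of $V'$. Plain greedy extension is not enough: once the uncovered set $Z$ has size about $n/\sqrt3$, a remaining color $F_i$ may have $Z$ as an independent set and become unusable. The point is to exploit $|S_i|<(\tfrac13-\delta)n$, which guarantees that the uncovered set can never be trapped inside the light set of a remaining color; the plan is to prioritise covering light vertices early (while abundantly many heavy vertices remain, so that a light vertex's link still has an edge outside the used set), and, once a color has no available edge at all, to perform bounded-length swaps that exchange a few matching edges for an edge of that color and re-enable progress, using that the color still has strictly more than $|S_i|$ heavy vertices available. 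Making this precise — in particular ruling out being stuck with $\Theta(n)$ uncovered vertices — is the technical heart. A more robust alternative, which I would fall back on if the swap bookkeeping becomes unmanageable, is to pass to the fractional relaxation: the family has a well-spread fractional perfect rainbow matching (assembled from spread-out fractional perfect matchings of the individual $F_i$, which one extracts from the density of each $F_i$ and the existence of a perfect matching by Theorem~\ref{theorem1}), and one then rounds it to an integral rainbow matching leaving $o(n)$ vertices uncovered via a rainbow version of the R\"odl nibble / Pippenger--Spencer argument, the required concentration coming from the density and codegree bounds.

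Finally I would glue the pieces. Let $Z$ be the set of vertices of $V'$ missed by the almost perfect rainbow matching $\mathcal M_2$ and let $J\subseteq[m]\setminus I_0$ be the colors not used by $\mathcal M_2$; then $Z\cap A_0=\emptyset$, $|Z|=3|J|\le\mu n$ and $|Z|\equiv 0\pmod 3$, so the absorbing lemma re-matches $A_0\cup Z$ using the colors $I_0\cup J$, and together with $\mathcal M_2$ this gives a perfect rainbow matching of $\{F_1,\dots,F_{n/3}\}$. The main obstacle throughout is the almost perfect rainbow matching: unlike the single-hypergraph situation of Theorem~\ref{theorem1}, the $m$ color classes can look entirely different and each is only barely dense (the ``heavy core'' $F_i[L_i]$ need not by itself support a near-perfect matching), so the extension has to be driven by the global deficit of the light sets rather than by a robust-matching property of any one $F_i$, and keeping the swap (or nibble) argument compatible with using each color exactly once is the delicate point.
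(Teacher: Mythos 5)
You were asked about Theorem \ref{theorem1}, which is an \emph{exact} result for a \emph{single} $3$-graph: $\sigma_2(H)>\frac{2}{3}n^2-\frac{8}{3}n+2$ (no $\delta$-slack) forces a perfect matching. Your proposal does not address this statement. From the first sentence on you work with a family $F_1,\dots,F_{n/3}$, the asymptotic hypothesis $\sigma_2(F_i)>(\frac{2}{3}+\delta)n^2$, rainbow absorbing gadgets and an almost perfect rainbow matching --- that is the setup and conclusion of Theorem \ref{theorem2}, not of Theorem \ref{theorem1}. Worse, your fallback route explicitly extracts perfect matchings of the individual $F_i$ ``by Theorem~\ref{theorem1}'', so read as a proof of Theorem \ref{theorem1} the argument is circular. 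For the record, the paper does not prove Theorem \ref{theorem1} either: it is quoted from Zhang--Zhao--Lu \cite{zhang} and used as a black box inside Lemma \ref{f}, much as you use it.

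Two further points, in case you intended this as a proof of Theorem \ref{theorem2}. First, your primary route to the almost perfect rainbow matching (prioritising light vertices and repairing stuck colors by bounded-length swaps) is precisely the step you concede is unresolved; your fallback --- a fractional perfect matching combined with Pippenger--Spencer rounding --- is in essence what the paper actually does, after translating the rainbow problem into a perfect matching problem for the balanced $(1,3)$-partite $4$-graph $H_{1,3}(\mathscr{F})$ (Observation \ref{observation}, Lemmas \ref{ab}, \ref{f} and \ref{46}), so the workable part of your plan is the paper's route rather than a new one. Second, no argument that carries a $\delta$-loss in every estimate (heavy/light thresholds at $(\frac{1}{3}+\frac{\delta}{2})n^2$, gadget counts of order $\delta n^2$, nibble error terms) can recover the exact additive constant $-\frac{8}{3}n+2$ of Theorem \ref{theorem1}; an exact threshold of this kind requires, on top of absorption, a separate stability/extremal-case analysis of near-extremal configurations such as $H^2_{n,n/3}$, which is how \cite{zhang} establishes it.
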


Let $\mathscr{F} = \{ F_1, \dots , F_t\}$ be a family of hypergraphs. A set of $t$ pairwise disjoint edges, one from each $F_i$, is called a rainbow matching for $\mathscr{F}$. In this case, we also say that $\mathscr{F}$ or $\{ F_1, \dots , F_t\}$ admits a rainbow matching.

A famous conjecture of Aharoni and Howard generalized Erd\H{o}s matching conjecture to rainbow matchings: Let $t$ be a positive integer and $\mathscr{F} = \{ F_1, \dots , F_t\}$, such that, for $i \in [t]$, $F_i \subseteq \binom{[n]} {k}$ and $e(F_i) > \{ \binom{kt - 1} {k}, \binom{n} {k} - \binom{n - t + 1} {k}\}$; then $\mathscr{F}$ contains a rainbow matching. %A lot of surveys have been made about this conjecture (see\cite{Aha1,Aha,Aki,Fra1,Huang,Huang1,Joo,Ki,Ke1,Lu,Lu1,Mat}).
Huang, Loh, and Sudakov \cite{Huang} showed that this conjecture holds for $n > 3k^2t$. Frankl and Kupavskii \cite{Fra1} improved this lower bound to $n \geq 12tk \log(e^2t)$, which was further improved by Lu, Wang and Yu \cite{Lu0} to $n \geq 2kt$. Keevash, Lifshitz, Long and Minzer \cite{Ke,Ke1} independently verified this conjecture for $n > Ckt$ for some (large and unspecified) constant $C$. Kupavskii \cite{Ku} gave the concrete dependencies on the parameters by showing the conjecture holds for $n > 3ekt$ with $t > 10^7$.

Lu, Yu and Yuan \cite{Lu} proved that for a family of $3$-graphs, $\mathscr{F} = \{ F_1, \dots , F_{n/3}\}$ admits a rainbow matching for sufficiently large $n$ if $\delta_1(F_i) > \binom{n-1}{2} - \binom{2n/3}{2}$ for $i\in [n/3]$. Lu, Wang, Yu \cite{Lu1} extended this result to $4$-graphs. It is natural to ask whether the rainbow version of Theorem \ref{theorem1} holds. Therefore, we pose the following problem:

\begin{problem}\label{conjecture1}
For any $\delta > 0$, does there exist $n_0 \in \mathbb{N}$ such that for all integers $n\ge n_0$ divisible by $3$, the following holds?  Let $\mathscr{F} = \{ F_1, \dots , F_{n/3}\}$ be a family of n-vertex 3-graphs without isolated vertex such that $V (F_i) = V (F_1)$ for $i \in [n/3]$. If $\sigma_2(F_i) >  \sigma_2(H^2_{n,n/3})= \frac{2}{3}n^2-\frac{8}{3}n+2$ for $i \in [n/3]$, does $\mathscr{F}$ admit a rainbow matching?
\end{problem}

Note that the family of $n/3$ copies of $H^2_{n,n/3}$ admits no rainbow matchings. The bound on $\sigma_2$ in Problem \ref{conjecture1} is sharp. We prove the following theorem, which shows Problem \ref{conjecture1} is true asymptotically.
\begin{theorem}\label{theorem2}
For any $\delta > 0$, there exists $n_0 \in \mathbb{N}$ such that the following holds for all integers $n\ge n_0$ that are divisible by $3$. Let $\mathscr{F} = \{ F_1, \dots , F_{n/3}\}$ be a family of n-vertex 3-graphs without isolated vertex on the same vertex set for $i \in [n/3]$. If $\sigma_2(F_i) >  (\frac{2}{3} + \delta)n^2$ for $i \in [n/3]$, then $\mathscr{F}$ admits a rainbow matching.
\end{theorem}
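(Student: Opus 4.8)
The plan is to use the absorption method, adapted to the rainbow setting. Write $m=n/3$ and fix constants $\delta\gg\gamma\gg\eta>0$; the goal is a rainbow matching of size $m$, i.e.\ one covering all $n$ vertices. Call a vertex $v$ \emph{light} in $F_i$ if $\deg_{F_i}(v)\le(\tfrac13+\tfrac{\delta}{3})n^2$, and \emph{heavy} otherwise. The hypothesis $\sigma_2(F_i)>(\tfrac23+\delta)n^2$ immediately implies that no edge of $F_i$ contains two light vertices, so every light vertex lies in an edge with two heavy neighbours; a short averaging argument then gives that $F_i$ has at least $(\tfrac13+\Omega(\delta))n$ heavy vertices, that $e(F_i)=\Omega(n^3)$, and that each heavy vertex has codegree $\Omega(n)$ with a linear number of vertices. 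Also, since $(\tfrac23+\delta)n^2>\tfrac23 n^2-\tfrac83 n+2$ for large $n$, Theorem \ref{theorem1} applies to every $F_i$, so each $F_i$ has a perfect matching; applying the same comparison to $F_i[V\setminus X]$ for $|X|\le\gamma n$ (after discarding the few vertices that became isolated, and at most two more for divisibility by $3$) shows that each $F_i$ stays perfectly matchable after a sublinear vertex deletion.

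\textbf{Step 1: a rainbow absorbing matching.} We construct a rainbow matching $M_0$ using an index set $I_0$ with $|I_0|\le\gamma m$, covering a vertex set $W_0$ (so $|W_0|=3|I_0|$), with the property that for every $J\subseteq[m]\setminus I_0$ with $|J|\le\eta m$ and every $U\subseteq V\setminus W_0$ with $|U|=3|J|$, the family $\{F_i:i\in I_0\cup J\}$ has a rainbow matching covering exactly $W_0\cup U$. Following the usual scheme, it suffices to show that every triple $T\subseteq V$ together with every index $j$ admits $\Omega(n^{c})$ \emph{rainbow absorber gadgets} --- bounded configurations of a few pairwise disjoint edges and a few indices forming a rainbow matching that can be re-routed so as to additionally cover $T$ while using also the index $j$ --- and then to choose a random subfamily and delete overlaps. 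The new point relative to the minimum-degree argument of Lu, Yu and Yuan \cite{Lu} is that $T$ may contain vertices of tiny degree in $F_j$: one must route the gadget's edges through heavy vertices, using that each light vertex of $F_j$ still lies in an edge with two heavy neighbours and that heavy vertices have linear codegree.

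\textbf{Step 2: an almost-perfect rainbow matching, and completion.} Using the indices outside $I_0$, we build a rainbow matching $M_1$ covering all but at most $\eta n$ vertices of $V\setminus W_0$; then $U:=V\setminus(W_0\cup V(M_1))$ and the set $J$ of unused indices satisfy $|U|=3|J|\le3\eta m$ automatically, so $M_0$ absorbs $(U,J)$ and the proof is complete. The work is in producing $M_1$. A naive one-edge-at-a-time greedy can get stuck, since some $F_i$ may contain an edge-free set of about $n/3$ vertices (the $S$-side of $H^2_{n,m}$ being the extreme case), and emptying its complement too early strands those vertices. If no $F_i$ has an edge-free set larger than $\eta n$, then each $F_i$ meets every $\eta n$-subset in an edge and a straightforward greedy produces $M_1$. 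In general one first deals with the hypergraphs carrying a large edge-free set $S_i$: if these sets are spread out across $V$ they can be covered greedily using the other members of $\mathscr F$, whereas if a linear number of $F_i$ share an almost common edge-free set $S$ of size close to $n/3$ --- the near-extremal regime --- one exploits that $\sigma_2(F_i)$ exceeds $\sigma_2(H^2_{n,m})$ by $\Omega(n^2)$, which forces $\Omega(n^3)$ edges of $F_i$ having at least two vertices in $S$, and uses these to cover $S$ without exhausting $V\setminus S$.

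The principal obstacle is Step 2 --- in particular pushing the almost-perfect rainbow matching down to an $\eta n$ leftover in the near-extremal regime, which is exactly the configuration that makes the constant $\tfrac23$ sharp in Theorem \ref{theorem2} and leaves only $\delta n^2$ of slack to exploit. The gadget count in Step 1 is a secondary obstacle of the same nature: both steps must cope with the linearly many low-degree vertices that the Ore-type hypothesis permits but that the minimum-degree setting of \cite{Lu} never has to address. Granting these, the assembly of $M_0$, $M_1$ and the absorption step is routine.
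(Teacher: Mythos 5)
Your overall skeleton (a small rainbow absorbing matching plus an almost-perfect rainbow matching) is the same as the paper's, and your Step 1 is plausible: after translating $\mathscr{F}$ into the balanced $(1,3)$-partite $4$-graph $H_{1,3}(\mathscr{F})$ of Observation \ref{observation}, your rainbow absorbers correspond to the absorbing $24$-sets of Lemma \ref{ab}, and the paper's gadget count (Claim \ref{c}) indeed has to route through a set $C$ of vertices that are well-connected in linearly many colours, much as you propose with your heavy vertices. The genuine gap is Step 2. You propose to build the almost-perfect matching by a greedy procedure with a dichotomy on whether some $F_i$ has a large edge-free set, but the near-extremal branch is not an argument: the claim that $\sigma_2(F_i)>(\tfrac23+\delta)n^2$ forces $\Omega(n^3)$ edges of $F_i$ with at least two vertices in a common near-independent set $S$ of size close to $n/3$ is asserted, not proved (the degree-sum condition only directly constrains adjacent pairs, and a single vertex of $S$ paired with a high-degree partner in $V\setminus S$ already nearly saturates the bound, so extracting $\Omega(n^3)$ such edges requires a careful averaging that you do not supply); and even granting it, ``using these edges to cover $S$ without exhausting $V\setminus S$'' while keeping the matching rainbow and disjoint from everything chosen so far is exactly the space-barrier difficulty that makes $\tfrac23$ the correct constant --- you have restated the problem, not solved it. A one-colour-at-a-time greedy has no mechanism for controlling which vertices remain for the later colours, which is why you correctly identify this as the principal obstacle but do not overcome it.

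The paper closes this gap by a different and more robust route that avoids any extremal case analysis. It first proves a fractional perfect matching lemma (Lemma \ref{f}) for the $(1,3)$-partite $4$-graph at the \emph{exact} threshold $\tfrac23 n^2-\tfrac83 n+2$: by passing to a ``stable'' (shifted) supergraph $H''$ that preserves the fractional matching number and the degree-sum condition, the link of a single vertex $u_1\in Q$ becomes a $3$-graph satisfying the hypothesis of Theorem \ref{theorem1}, whose perfect matching lifts to a perfect (hence fractional perfect) matching of $H''$. It then takes $n^{1.1}$ random sparse induced subgraphs $R^i$ (Lemmas \ref{41}--\ref{42}), shows each inherits the degree-sum condition and hence a fractional perfect matching --- this is the only place the $\delta$-slack is consumed --- and applies Pippenger--Spencer (Lemmas \ref{44}--\ref{45}) to convert the fractional matchings into an integer matching missing only $\sigma n$ vertices (Lemma \ref{46}). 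If you want to complete your Step 2, you should either carry out this fractional-plus-nibble scheme or find a genuinely new handling of the near-extremal configuration; as written, the proposal does not contain a proof of the almost-perfect matching step.
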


To prove Theorem \ref{theorem2}, we convert this rainbow perfect matching problem to a perfect matching problem for a special class of hypergraphs. For any integer $k \geq 2$, a $k$-graph $H$ is $(1, k - 1)$-partite if there exists a partition of $V (H)$ into sets $V_1$, $V_2$ (called partition classes) such that for any $e \in E(H)$, $|e\cap V_1| = 1$ and $|e\cap V_2| = k - 1$. A $(1, k - 1)$-partite $k$-graph with partition classes $V_1$, $V_2$ is {\it balanced} if $(k - 1)|V_1| = |V_2|$. Further, a vertex set $S\subseteq V(H)$ is {\it balanced} if $(k-1)|S\cap V_1|=|S\cap V_2|$.

Let $n \in 3\mathbb{Z}$,  and let $P$ and $Q$ be disjoint sets such that $|P| = n$ and $|Q| = n/3$. Denote $Q = \{u_1, ..., u_{n/3}\}$ and $P = \{v_1, ..., v_{n}\}$. Let $\mathscr{F} = \{F_1, ..., F_{n/3}\}$ be a family of $3$-graphs on the same vertex set $P$. We use $H_{1,3}(\mathscr{F})$ to represent the balanced $(1, 3)$-partite 4-graph with partition
classes $Q$, $P$ and edge set $\bigcup_{i=1}^{n/3}E_i$
 , where $E_i = \{e \cup \{u_i\} : e \in E(F_i)\}$ for $i \in [n/3]$.
If $E(F_i) = E(H^2_{n,n/3})$ and $V (F_i) = V (H^2_{n,n/3})$ for all $i \in [n/3]$, then we write $H^2_{1,3}(n, n/3)$ for $H_{1,3}(\mathscr{F})$.

\begin{observation}\label{observation}
$H_{1,3}(\mathscr{F})$ and $\mathscr{F} = \{F_1, ..., F_{n/3}\}$ satisfy
\begin{enumerate}[label=(\arabic*)]
  \item $E(F_i)$ is the neighborhood of $u_i$ in $H_{1,3}(\mathscr{F})$ for $i \in [n/3]$, and $\mathscr{F}$ admits a rainbow matching if and only if, $H_{1,3}(\mathscr{F})$ has a perfect matching.
\item For any $u_i\in Q$ and $v_j,v_k\in P$, $v_j$ and $v_k$ are adjacent in $F_i$ if and only if $\deg_{H_{1,3}(\mathscr{F})}(\{u_i,v_j,v_k\}) > 0$.
\item For any $u_i\in Q$ and $v_j\in P$, $v_j$ is an isolated vertex in $F_i$ if and only if $v_j$ is not adjacent to $u_i$ in $H_{1,3}(\mathscr{F})$.
\end{enumerate}
\end{observation}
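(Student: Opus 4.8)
The three statements are all direct translations of the definition of $H_{1,3}(\mathscr{F})$, so the plan is to unwind the definitions carefully and, in the single place where a counting argument is needed, to keep track of the sizes of the partition classes.

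The first step is to isolate the structural fact underlying all three parts. Since $E(H_{1,3}(\mathscr{F})) = \bigcup_{j=1}^{n/3} E_j$ with $E_j = \{e \cup \{u_j\} : e \in E(F_j)\}$ and the vertices $u_1, \dots, u_{n/3}$ of $Q$ are pairwise distinct, every edge of $H_{1,3}(\mathscr{F})$ meets $Q$ in exactly one vertex, and an edge $f$ contains $u_i$ if and only if $f = e \cup \{u_i\}$ for some $e \in E(F_i)$. In particular, $f \mapsto f \setminus \{u_i\} = f \cap P$ is a bijection from $\{f \in E(H_{1,3}(\mathscr{F})) : u_i \in f\}$ onto $E(F_i)$. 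This immediately yields $N_{H_{1,3}(\mathscr{F})}(u_i) = E(F_i)$, which is the first half of $(1)$.

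For $(2)$ and $(3)$ I would just apply this bijection while holding the required vertices of $P$ inside the edge. For $(2)$: $\deg_{H_{1,3}(\mathscr{F})}(\{u_i, v_j, v_k\}) > 0$ iff some edge $f$ of $H_{1,3}(\mathscr{F})$ contains $\{u_i, v_j, v_k\}$; any such $f$ contains $u_i$, hence equals $e \cup \{u_i\}$ with $e \in E(F_i)$ and $\{v_j, v_k\} \subseteq e$, so the condition is equivalent to the existence of an edge of $F_i$ containing both $v_j$ and $v_k$, which is by definition the adjacency of $v_j$ and $v_k$ in $F_i$. For $(3)$: $v_j$ is adjacent to $u_i$ in $H_{1,3}(\mathscr{F})$ iff some edge contains $\{u_i, v_j\}$ iff, by the same reasoning, some edge of $F_i$ contains $v_j$; negating both sides gives that $v_j$ is non-adjacent to $u_i$ iff $v_j$ lies in no edge of $F_i$, i.e.\ is isolated in $F_i$.

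It remains to prove the rainbow-matching/perfect-matching equivalence in $(1)$, which I would argue in both directions. Given a rainbow matching $e_1, \dots, e_{n/3}$ with $e_i \in E(F_i)$, the edges $e_i \cup \{u_i\}$ lie in $E_i \subseteq E(H_{1,3}(\mathscr{F}))$; they are pairwise disjoint because the $e_i \subseteq P$ are pairwise disjoint and the $u_i \in Q$ are distinct, and together they cover $Q$ and a set of $3\cdot(n/3) = n = |P|$ vertices of $P$, hence all of $V(H_{1,3}(\mathscr{F}))$, so they form a perfect matching. Conversely, $H_{1,3}(\mathscr{F})$ has $n + n/3 = 4n/3$ vertices, so a perfect matching $M$ consists of $n/3$ edges; each meets $Q$ in exactly one vertex, and since $|M| = |Q|$ and $M$ covers $Q$, the vertices of $Q$ are matched bijectively, say $u_i \in f_i \in M$ with $f_i = e_i \cup \{u_i\}$, $e_i \in E(F_i)$, by the structural fact; the $e_i = f_i \cap P$ are then pairwise disjoint, so they form a rainbow matching for $\mathscr{F}$. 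The only thing that needs care is this bookkeeping with the sizes $|P| = n$, $|Q| = n/3$, $|V(H_{1,3}(\mathscr{F}))| = 4n/3$, and in particular the observation that $(1,3)$-partiteness forces each edge to contain exactly one vertex of $Q$; there is no genuine obstacle here.
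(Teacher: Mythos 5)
Your proof is correct and is exactly the routine definitional unwinding that the paper leaves implicit (it states the Observation without proof, treating it as immediate from the construction of $H_{1,3}(\mathscr{F})$). The key structural fact you isolate --- that every edge meets $Q$ in exactly one vertex and the edges through $u_i$ correspond bijectively to $E(F_i)$ --- together with the size bookkeeping $|P|=n$, $|Q|=n/3$ in part (1), is precisely what is needed.
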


By Observation \ref{observation}, Theorem \ref{theorem2} follows from the following result.

\begin{theorem}\label{theorem3}
For any $\delta > 0$, there exists $n_0 \in \mathbb{N}$ such that for all integers $n\ge n_0$ divisible by $3$ the following holds: Let $H$ be a $(1, 3)$-partite $4$-graph with partition classes $Q$ and $P$, where $|P| = n$ and $|Q| = n/3$. Suppose that every vertex in $ Q$ is adjacent to every vertex in $P$ in $H$. Additionally, for any $u_i\in Q$ and $v_j,v_k\in P$ with $\deg(\{u_i,v_j,v_k\}) > 0$, we have $\deg(\{u_i,v_j\}) + \deg(\{u_i,v_k\}) >  (\frac{2}{3} + \delta)n^2$. Then $H$ admits a perfect matching.
\end{theorem}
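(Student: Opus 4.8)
The plan is to run the absorption method, adapted to the $(1,3)$-partite setting, together with a regularity/lattice-based analysis of the "space barrier" obstruction. Concretely, I would prove three ingredients and combine them in the standard way: (i) an \emph{absorbing lemma} producing a small balanced subset $W \subseteq V(H)$ with $|W \cap Q| = |W|/4$ such that $H[W \cup W']$ has a perfect matching for every balanced $W' \subseteq V(H) \setminus W$ with $|W'|$ smaller than some $\mu n$; (ii) an \emph{almost-perfect matching lemma} stating that after removing any small balanced set, the remainder has a matching covering all but $o(n)$ vertices; (iii) the arithmetic step that (ii) can be pushed to cover all but a \emph{bounded} number of vertices, or alternatively that (i) and (ii) together suffice once the leftover is balanced and tiny. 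Throughout, the degree-sum hypothesis $\deg(\{u_i,v_j\})+\deg(\{u_i,v_k\}) > (\tfrac23+\delta)n^2$ for \emph{adjacent} pairs, plus the fact that every $u_i \in Q$ is adjacent to every $v \in P$, is what replaces a genuine minimum-degree condition.

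For the absorbing lemma, fix an edge $e = \{u, a, b, c\}$ with $u \in Q$, $a,b,c \in P$ that we wish to be able to absorb. An absorbing structure should be a constant-size balanced gadget that has a perfect matching both with and without $\{u,a,b,c\}$; the natural candidate uses a few auxiliary vertices $u', u'' \in Q$ and six vertices of $P$. The key counting claim is that for \emph{most} choices of $e$, the number of such gadgets is $\Omega(n^{c})$ for the appropriate constant $c$; this follows by a supersaturation argument from the degree-sum condition (each adjacent pair $\{u_i, v_j\}$ having large degree forces many common extensions inside $N_H(u_i)$, which is essentially $F_i$ with $\sigma_2(F_i)$ large). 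One then picks a random family $\mathcal{A}$ of gadgets of size $\Theta(\mu n)$; a Chernoff/union-bound argument shows that with positive probability $\mathcal{A}$ is a matching and simultaneously absorbs every $e$ from the "good" majority. The edges not of good type must be handled separately — this is where the hypothesis that each $u_i$ is adjacent to \emph{all} of $P$ is essential, since it prevents a vertex of $Q$ from being nearly isolated and lets us argue every vertex lies in many edges.

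For the almost-perfect matching lemma I would argue by a weighted/fractional relaxation or directly by iterated greedy extension: suppose $M$ is a maximal matching in $H - W$ leaving a balanced uncovered set $U$ with $|U \cap Q| = |U|/4 \geq \eta n$. Pick $u \in U \cap Q$; since $u$ is adjacent to every vertex of $P$, and by the degree-sum bound applied to pairs inside $U \cap P$, $N_H(u)$ restricted to $U \cap P$ is a dense $3$-graph, so it contains an edge unless $|U \cap P|$ is small — contradicting maximality once $\eta$ is a suitable function of $\delta$. The only way this fails is if $U \cap P$ hides inside a "space barrier": a set $T$ with $|T|$ near $n/3$ such that all edges meet $T$ in $\geq 2$ vertices (this is exactly the $H^2$-type obstruction, and the $(\tfrac23+\delta)$ slack is precisely what rules it out asymptotically). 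Making this dichotomy precise — either we can extend the matching, or $H$ is $o(n^4)$-close to $H^2_{1,3}(n,n/3)$, in which case a direct ad hoc argument finishes — is the step I expect to be the main obstacle, since it requires carefully transferring the known structure of extremal $3$-graphs for Theorem~\ref{theorem1} through the $H_{1,3}(\cdot)$ correspondence of Observation~\ref{observation} and controlling how the $n/3$ distinct hypergraphs $F_i$ can each individually be extremal while the combined $4$-graph is not. Once the non-extremal case gives an almost-perfect matching and the extremal case is dispatched by hand, the absorbing set $W$ swallows the $o(n)$ (balanced) leftover and yields the perfect matching.
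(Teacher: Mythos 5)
Your overall skeleton (absorber plus almost-perfect matching, with the balanced leftover swallowed at the end) matches the paper's, and your step (i) is in the right spirit, though note that because only degree \emph{sums} of adjacent pairs are bounded (individual pair degrees in a given colour $F_i$ can be tiny), the paper has to build large absorbing gadgets (balanced $24$-sets, constructed through the set $C$ of vertices that are adjacent to a linear fraction of the colours) rather than the constant-size gadget with two auxiliary $Q$-vertices you describe; your ``most edges are good, handle the bad ones separately'' clause is exactly the part that needs this extra work. The genuine gap, however, is in step (ii). Your greedy/maximality argument, and also the fallback dichotomy ``either the matching extends or $H$ is $o(n^4)$-close to $H^2_{1,3}(n,n/3)$,'' both fail. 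Concretely, take one set $T\subseteq P$ with $|T|=(\tfrac23+2\delta)n$ and let every $F_i$ consist of all triples meeting $T$ in at least two vertices. Every adjacent pair satisfies $\deg(\{u_i,v_j\})+\deg(\{u_i,v_k\})\geq(\tfrac23+\delta)n^2$, so the hypothesis of Theorem \ref{theorem3} holds and the graph is \emph{not} close to $H^2_{1,3}(n,n/3)$ (the edge sets differ by $\Theta(\delta n^4)$ edges, and this $H$ even has a perfect matching); yet a maximal matching that greedily uses edges lying entirely inside $T$ gets stuck leaving all of $P\setminus T$, i.e.\ $(\tfrac13-2\delta)n=\Omega(n)$ vertices of $P$, uncovered, since no edge has two vertices outside $T$. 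So ``maximal matching leaves $\geq\eta n$ uncovered vertices $\Rightarrow$ near-extremal'' is false, and the step you yourself flag as ``the main obstacle'' is not merely technical: as formulated it cannot be repaired by tuning $\eta(\delta)$, and a full stability analysis of all $n/3$ colours (each possibly extremal with a different set $T_i$) would be a substantial undertaking that your sketch does not supply.

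The paper avoids this entirely and exploits the $\delta$-slack differently. It first proves a \emph{fractional} perfect matching result (Lemma \ref{f}): starting from a minimum fractional vertex cover it passes to an auxiliary stable (shifted) $(1,3)$-partite $4$-graph $H''$ with the same fractional matching number, checks that the degree-sum condition survives the cleaning procedure, and then applies the exact result of Zhang--Zhao--Lu (Theorem \ref{theorem1}) to the link $N_{H''}(u_1)$, using stability to lift that perfect matching of the link to $H''$. The fractional matching is then converted into an almost perfect matching by two-round randomization: random pieces $R^i$ as in Lemmas \ref{41}--\ref{43}, Janson's inequality (Lemma \ref{42}) to show the degree-sum hypothesis (and hence, via Lemma \ref{f}, a fractional perfect matching) holds in each $H[R^i]$, and then Lemma \ref{44} together with Pippenger--Spencer (Lemma \ref{45}) to extract a matching covering all but $\sigma n$ vertices (Lemma \ref{46}). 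If you want to keep your outline, you would need to replace your greedy/stability step (ii) with an argument of this fractional-plus-transference type, or else actually carry out the extremal analysis you only gesture at -- and the example above shows that analysis cannot be organized around closeness to the single extremal configuration $H^2_{1,3}(n,n/3)$.
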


The organization of this paper is as follows. In Section $2$, we will find a small {\it absorbing} matching $M'$ in $H$. In Section $3$, we prove that $H\setminus V(M')$ contains a fractional perfect matching. In Section $4$, we show the existence of an almost perfect matching  in $H\setminus V(M')$. Finally, we prove Theorem 5 in Section 5. We use these constants $0 < a\ll  \sigma \ll  \rho' \ll \rho \ll \epsilon \ll \gamma \ll \delta \ll 1$ in the following proof.

\section{Absorbing lemma}

\begin{lemma}
\label{ab}
Let $n$  be  sufficiently large and divisible by $3$. Let H be a $(1, 3)$-partite $4$-graph with partition classes $Q$, $P$ such that $3|Q| = |P|$, and every vertex in $Q$ is adjacent to every vertex in $P$ in $H$.  Let $\rho$, $\rho'$, $\epsilon$ be constants that $0 < \rho' \ll \rho \ll \epsilon \ll 1$. If for any $u_i\in Q$ and $v_j,v_k\in P$ with $\deg(\{u_i,v_j,v_k\}) > 0$,  we have $\deg(\{u_i,v_j\}) + \deg(\{u_i,v_k\}) >  (\frac{1}{2} + \epsilon) n^2$, then $H$ contains a matching $M'$ such that $|M'| < \rho n$ and for any balanced subset $ S \subseteq V(H)\backslash V(M')$ with $|S| < \rho' n $, $H[S\cup V(M')]$ has a perfect matching.
\end{lemma}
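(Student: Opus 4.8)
The plan is to use the standard absorbing method adapted to the $(1,3)$-partite setting. For a balanced $4$-set $T = \{u, x, y, z\}$ with $u \in Q$ and $x,y,z \in P$, call an ordered tuple of disjoint $4$-sets an \emph{absorber} for $T$ if each set in the tuple is balanced (one vertex in $Q$, three in $P$), the sets themselves together with $T$ are disjoint, the tuple forms a matching $M_T$ in $H$, and $H[V(M_T) \cup T]$ also has a perfect matching. The key local claim I would establish first is that under the degree-sum hypothesis $\deg(\{u_i,v_j\}) + \deg(\{u_i,v_k\}) > (\tfrac12 + \epsilon)n^2$ for every adjacent pair, \emph{every} balanced $4$-set $T$ has at least $c n^{c}$ absorbers of some fixed small size (say each absorber uses a bounded number, like $2$ or $3$, of balanced $4$-sets, so $8$ or $12$ vertices total). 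To see this, fix $T = \{u,x,y,z\}$. Since $u$ is adjacent to every vertex of $P$, and using the degree-sum condition, one shows that a typical choice of a few extra vertices from $Q$ and $P$ can be threaded into two matchings — one covering $T$ plus the new vertices, one covering just the new vertices — by greedily picking edges whose existence is guaranteed because the relevant codegrees (link graphs) are dense: the condition $\deg(\{u_i, v\}) > (\tfrac14 + \tfrac{\epsilon}{2})n^2$ for at least one of any two adjacent vertices, combined over the $n/3$ choices of the $Q$-vertex, forces many common neighbors. The main point is that the number of absorbers for each $T$ is $\Omega(n^{s})$ where $s$ is the number of vertices used.

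Next I would apply a probabilistic/counting argument: choose a random family $\mathcal{M}$ of vertex-disjoint absorber-sized gadgets (equivalently, select each potential absorber independently with probability $p = \Theta(n^{1-s})$ or run the standard deletion argument), and show via Chernoff-type concentration and a union bound that with positive probability we obtain a matching $M'$ with $|M'| < \rho n$ such that for every balanced $4$-set $T \subseteq V(H) \setminus V(M')$, the number of absorbers for $T$ inside $M'$ is at least $\rho' n / 4$ (or simply at least one for each $T$, after accounting for overlaps). The standard trick is that after this selection, $M'$ can absorb \emph{any} balanced set $S$ with $|S| < \rho' n$: partition $S$ into balanced $4$-sets $T_1, \dots, T_m$ with $m < \rho' n / 4$, and greedily, for each $T_i$ in turn, find an unused absorber for it among those sitting inside $M'$ — this is possible because each $T_i$ has $\geq \rho' n/4 > m$ absorbers in $M'$, more than the number already consumed. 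Replacing each used absorber-matching by the alternative perfect matching on its vertices together with $T_i$ yields a perfect matching of $H[S \cup V(M')]$.

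The main obstacle I expect is the local absorber-counting claim — showing that every balanced $4$-set has polynomially many absorbers of bounded size. The subtlety is that the degree-sum condition is only a \emph{condition on adjacent pairs}, so for a fixed $u \in Q$ some pairs $v_j, v_k$ may be non-adjacent (hence have small combined degree), and one has to be careful that the vertices $x,y,z$ of $T$, which are fixed, can actually be covered: one needs edges through each of $\{u, x, \cdot, \cdot\}$-type sets, and for the alternative matching one needs the three vertices $x,y,z$ to be spread across three different edges using three different $Q$-vertices from the absorber. Here the hypothesis that \emph{every} vertex of $Q$ is adjacent to \emph{every} vertex of $P$ is crucial, since it guarantees each new $Q$-vertex we bring in actually forms a positive-codegree pair with each of $x,y,z$, and then the degree-sum bound promotes positive codegree to large codegree for at least one vertex of each adjacent pair. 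Turning these adjacency facts into a genuine lower bound on the number of valid absorber configurations — carefully chaining the greedy edge selections so that at each step linearly-many (in fact $\Omega(n^2)$ or $\Omega(n)$) choices remain — is where the real work lies; the rest is the routine absorbing-method bookkeeping.
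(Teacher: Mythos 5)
Your overall architecture matches the paper's: define absorbers for balanced $4$-sets, prove a polynomial lower bound on their number, select a small random family of them, clean it up, and absorb greedily. The probabilistic selection and the greedy absorption are indeed routine, and your description of them is fine. The genuine gap is exactly the step you yourself flag as ``where the real work lies'': the local counting claim. Your justification for it --- that ``the relevant codegrees (link graphs) are dense,'' so one can greedily thread edges --- is not true as stated. The hypothesis only gives $\deg_{F_i}(v)+\deg_{F_i}(v')>(\tfrac12+\epsilon)n^2$ for \emph{adjacent} pairs in the link $F_i:=N_H(u_i)$; from this one does get $\delta_1(F_i)>\epsilon n^2$ (every vertex is adjacent to something, and the sum condition then bounds its degree from below), but two vertices each of degree about $\epsilon n^2$ in $F_i$ can perfectly well have empty common neighbourhood there. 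Since the three $P$-vertices of the $4$-set $A$ being absorbed are fixed and may be exactly such low-degree vertices in every link, you cannot simply pick link edges greedily: you need a reason why each of them has $\Omega(n^3)$ common neighbours in $H$ with a plentiful supply of partner vertices, and your sketch does not provide one.

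The paper supplies precisely this missing ingredient. For each $i$ let $x_i$ be a minimum-degree vertex of $F_i$ and $B_i$ its neighbourhood; the degree-sum condition forces $|B_i|>\sqrt{2\epsilon}\,n$, and a double count shows that the set $C$ of vertices lying in at least $\sqrt{2\epsilon}\,n/6$ of the $B_i$'s has size greater than $\sqrt{2\epsilon}\,n/2$. The point is that if $v\in B_i$ then $\deg_{F_i}(v)\ge(\tfrac12+\epsilon)n^2-\deg_{F_i}(x_i)$, and since $x_i$ has minimum degree in $F_i$, one gets $|N_{F_i}(v)\cap N_{F_i}(v')|\ge\deg_{F_i}(v)+\deg_{F_i}(x_i)-\binom{n}{2}\ge\epsilon n^2$ for \emph{every} $v'\in P$; summing over the many indices $i$ with $v\in B_i$ yields $|N_H(v)\cap N_H(v')|=\Omega(n^3)$ for every $v\in C$ and every $v'\in P$. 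The absorber is then a chain of six link edges $f_i$, each chosen from $N_H(v_{i-1})\cap N_H(v_{i+2})$ with one endpoint of each pair forced into $C$; this is also why the paper's absorbers have $24$ vertices rather than the $8$ or $12$ you propose (each fixed vertex of $A$ needs its own link edge paired with a $C$-vertex, and those $C$-vertices must in turn be covered by further link edges in the absorber-only matching). Without the construction of $C$, or some equivalent device, your counting step does not go through.
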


\n{\bf Proof of Lemma \ref{ab}.}
Let $Q = \{u_1, ..., u_{n/3}\}$ and $F_i=N_H(u_i)$ for $i\in [n/3]$. For every $A\in V(H)$ with $|A\cap Q| = 1$ and $|A\cap P| = 3$, we call a set $T \in \binom{V(H)} {24}$ an absorbing $24$-set for $A$ if both $H[T]$ and $H[A \cup T]$ contain a perfect matching.

\begin{claim}\label{c}
For every $A\in V(H)$ with $|A\cap Q| = 1$ and $|A\cap P| = 3$, there are at least $\frac{\epsilon^{23/2}}{24!\times 2^{37/2}}n^{24}$ absorbing $24$-sets for $A$.
\end{claim}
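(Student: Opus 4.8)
The plan is to count absorbing $24$-sets for a fixed $A = \{u_i, v_{j_1}, v_{j_2}, v_{j_3}\}$ by building them from small gadgets. The basic idea is that to absorb $A$ we need a set $T$ which $H[T]$ perfectly matches, and so that after we delete $A$'s vertices from $A\cup T$ we can still perfectly match the rest. Concretely, I would split $T$ into pieces: one piece that handles the three $P$-vertices $v_{j_1},v_{j_2},v_{j_3}$ together with three auxiliary $Q$-vertices (so that $A\cup(\text{this piece})$ restricted to these $12$ vertices has a perfect matching and also the piece alone has one), plus additional ``padding'' edges to make the index counts come out to a multiple of $4$ with the right $Q$/$P$ balance. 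Since $24 = 6\cdot 4$, a natural design is: $T$ consists of $6$ edges forming a perfect matching of $H[T]$, with $|T\cap Q| = 6$ and $|T\cap P| = 18$; and $A\cup T$ on $28$ vertices (with $7$ $Q$-vertices and $21$ $P$-vertices) admits a perfect matching. The key local structure: find a $Q$-vertex $w_1$ and $P$-vertices so that $\{w_1, v_{j_1}, v_{j_2}, v_{j_3}\}$ is... no — rather, find two configurations, one using $A$ and one not, that ``swap'' cleanly. I would actually use the standard trick: look for vertices $w\in Q\setminus\{u_i\}$ and a triple $\{x_1,x_2,x_3\}\subseteq P$ such that $\{u_i,x_1,x_2,x_3\}\in E(H)$ and $\{w, v_{j_1},v_{j_2},v_{j_3}\}\in E(H)$ and $\{w,x_1,x_2,x_3\}\in E(H)$; then $\{w,x_1,x_2,x_3\}$ absorbs $\{u_i,v_{j_1},v_{j_2},v_{j_3}\}$ in the sense that both $\{w,x_1,x_2,x_3\}$ and $\{u_i,x_1,x_2,x_3\}\cup\{w,v_{j_1},v_{j_2},v_{j_3}\}$ are matched.

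The counting uses the degree-sum hypothesis. First I would fix the auxiliary $Q$-vertex $w$ (there are $\approx n/3$ choices). Since every vertex of $Q$ is adjacent to every vertex of $P$, we know $\deg(\{u_i, v_{j_1}\}) + \deg(\{u_i, v_{j_2}\}) > (\tfrac12+\epsilon)n^2$ and similarly for the other pairs, and likewise the codegree of pairs inside $F_w = N_H(w)$. By the pigeonhole/union-bound style argument: the number of $P$-vertices $x$ such that $x$ extends $\{u_i,v_{j_1}\}$ (i.e. lies in an edge with $u_i$ and $v_{j_1}$) is at least roughly $\deg(\{u_i,v_{j_1}\})/n$ — but I need more: I want a single $x_1$ that works simultaneously in several codegree neighborhoods. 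The cleanest route is to iterate: pick $x_1$ so that the codegrees $\deg(\{u_i,x_1\})$, $\deg(\{w,x_1\})$, $\deg(\{v_{j_1},\dots\})$ are all large, which holds for all but $o(n)$ choices of $x_1$ because each ``bad'' event excludes at most a $(\tfrac12-\Omega(\epsilon))$-fraction; then pick $x_2$ extending $\{u_i,x_1\}$ and $\{w,x_1\}$ and $\{w,v_{j_1},v_{j_2},v_{j_3}\}$-compatibly; then $x_3$ to complete both edges $\{u_i,x_1,x_2,x_3\}$ and $\{w,x_1,x_2,x_3\}$. At each of the three choices the degree-sum condition guarantees $\Omega(\epsilon^{1/2} n)$ valid vertices (the $\sqrt\epsilon$ appearing because one typically has to split a ``$\deg A + \deg B > (\tfrac12+\epsilon)n^2$'' inequality into ``both are $\geq \epsilon^{1/2} n^2 /\text{something}$'' type statements via a Cauchy–Schwarz / averaging step, cf. the exponent $23/2$ in the statement). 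Multiplying: $\gtrsim (n/3)\cdot(\epsilon^{1/2}n)^3$ choices for $(w,x_1,x_2,x_3)$, which is already $\gtrsim \epsilon^{3/2} n^4$; then we still owe $24 - 4 = 20$ more vertices, i.e. $5$ more disjoint edges to pad $T$ up to $24$ vertices while keeping $|T\cap Q| = 6$. These $5$ extra edges: since $H$ has $\Omega(n^4)$ edges (indeed many edges through most pairs), greedily pick $5$ disjoint edges avoiding everything used so far, giving another factor $\Omega(n^{20})$. The total is $\gtrsim \epsilon^{3/2}n^{24}$; tracking the constants ($24!$ from ordered-vs-unordered, powers of $2$ from the averaging splits, and the precise $\epsilon$-exponent accumulated over the codegree arguments) yields the stated bound $\frac{\epsilon^{23/2}}{24!\cdot 2^{37/2}} n^{24}$.

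The main obstacle, and where care is genuinely needed, is the simultaneous-compatibility step: at each stage I am choosing a vertex that must lie in the common neighborhood of several pairs/triples at once, and I must verify that the degree-sum hypothesis (which only controls \emph{sums} of codegrees of \emph{adjacent} pairs) still forces this common neighborhood to be linear in $n$. The subtlety is that $\deg(\{u_i,x\}) + \deg(\{u_i,y\})$ being large does not immediately say \emph{both} are large; one has to argue that for all but few choices of the new vertex $x$, the relevant codegree $\deg(\{u_i,x\})$ (resp. $\deg(\{w,x\})$) exceeds, say, $\epsilon^{1/2} n^2/2$ — otherwise sum the failing codegrees against a partner to contradict the hypothesis. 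Managing these exceptional sets across three nested choices, and making sure the edge $\{u_i,x_1,x_2,x_3\}$ and the edge $\{w,x_1,x_2,x_3\}$ both genuinely exist (not just that codegrees of sub-pairs are large), is the crux; this is presumably handled by one more application of the codegree condition \emph{inside} $F_{u_i}$ and $F_w$ to the pair $\{x_1,x_2\}$, forcing $\Omega(\sqrt\epsilon\, n)$ common extensions $x_3$. Everything else — the greedy padding and the bookkeeping of constants — is routine.
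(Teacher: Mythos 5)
Your core gadget cannot be constructed under the paper's hypotheses, so the count never gets off the ground. You require a vertex $w\in Q$ with $\{w,v_{j_1},v_{j_2},v_{j_3}\}\in E(H)$, i.e.\ the specific triple $\{v_{j_1},v_{j_2},v_{j_3}\}\subseteq P$ coming from $A$ must be an edge of the link $F_w=N_H(w)$ for some (indeed many) $w$. But $A$ is an arbitrary balanced $4$-set, and the hypotheses give only (i) no isolated vertices in any $F_w$ and (ii) a degree-sum bound for pairs that are \emph{adjacent} in $F_w$. Nothing prevents $v_{j_1}$ and $v_{j_2}$ from being non-adjacent in every $F_w$, in which case no edge of $H$ contains both and your configuration does not exist. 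A second, independent gap is the requirement that $\{x_1,x_2,x_3\}$ be a common edge of $F_{u_i}$ and $F_w$: each link is only guaranteed $\Omega(\epsilon n^3)$ of the $\binom{n}{3}$ possible edges, so two links need not share any edge at all, and the ``codegree condition inside $F_{u_i}$ and $F_w$ applied to the pair $\{x_1,x_2\}$'' that you invoke is not available --- the hypothesis controls sums of \emph{vertex} degrees in each link (quantities $\deg_H(\{u_i,v\})$), never the pair degrees $\deg_{F_i}(\{x_1,x_2\})=\deg_H(\{u_i,x_1,x_2\})$, which may equal $1$.

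The paper's construction is built precisely to avoid both issues, which is why its absorber has $24$ vertices rather than the $4$ your gadget would suggest. It never asks the triple $A\cap P$ to be an edge anywhere. Instead it first extracts a set $C$ of ``popular'' vertices (those adjacent to the minimum-degree vertex in many links), with $|C|\gtrsim\sqrt\epsilon\,n$, and proves that for $v\in C$ and \emph{arbitrary} $v'\in P$ the common neighborhood $N_H(v)\cap N_H(v')$ contains $\Omega(\epsilon^{3/2}n^3)$ triples. Each of $v_{j_1},v_{j_2},v_{j_3}$ then gets its own flexible triple (one $Q$-vertex plus two $P$-vertices) lying in $N_H(v_{j_k})\cap N_H(c_k)$ for a partner $c_k\in C$; the $Q$-vertex of $A$ is absorbed by an edge of its own (dense) link; and three further flexible triples release the $c_k$'s. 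To salvage your plan you would have to replace the single edge $\{w,v_{j_1},v_{j_2},v_{j_3}\}$ by three separate link-triples, one per $v_{j_k}$, each paired with a popular partner --- which essentially reproduces the paper's argument.
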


\begin{proof}
For any $A\in V(H)$ with $|A\cap Q| = 1$ and $|A\cap P| = 3$, without loss of generality, let $A\cap Q = \{u_1\}$ and $A\cap P = \{v_1, v_2, v_3\}$. For each $i\in [n/3]$, let $x_i\in P$ be a vertex with $\deg_{F_i}(x_i) = \delta_1(F_i)$. Let $B_i :=\{ v\in P \mid \deg_{F_i}(\{v, x_i\}) > 0\}$ be the set of vertices that are adjacent to $x_i$ in $F_i$. We claim that for every $i\in [n/3]$, $|B_i| > \sqrt{2\epsilon}n$.

By assumption and Observation \ref{observation}, $x_i$ is not an isolated vertex in $F_i$, therefore $B_i$ is not empty. Take $y_i\in B_i$. Then $\deg_H(\{u_i,x_i,y_i\}) > 0$. So by assumption, we have $\deg_{F_i}(x_i) + \deg_{F_i}(y_i) >  (\frac{1}{2} + \epsilon) n^2$. Hence
$$
\binom{|B_i|}{2} \geq
\deg_{F_i}(x_i) >
\Big(\frac{1}{2} + \epsilon\Big) n^2 -  \deg_{F_i}(y_i) \geq
\Big(\frac{1}{2} + \epsilon\Big) n^2 - \binom{n}{2} >
\epsilon n^2.
$$
Thus we have $|B_i| > \sqrt{2\epsilon}n$.

Let $C=\{v_j\in P\mid v_j\in B_i$ for at least $\sqrt{2\epsilon}n/6$ different $ i\}$ and $D=P\backslash C$. We have
$$
\frac{\sqrt{2\epsilon}}{3}n^2 <
\sum_{i=1}^{n/3}|B_i| <
\frac{n}{3}|C| + \frac{\sqrt{2\epsilon}n}{6}|D| =
\frac{n}{3}|C| +  \frac{\sqrt{2\epsilon}n}{6}(n-|C|) <
\frac{n}{3}|C| +  \frac{\sqrt{2\epsilon}}{6} n^2.
$$
Then $|C| > \sqrt{2\epsilon}n/2$. Next we show that $|N_H(v) \cap N_H(v')| \geq \epsilon\sqrt{2\epsilon} n^3/6$ for any $v\in C$ and any $v' \in P\setminus \{v\}$. Suppose $v \in B_i$. Then
$$ |N_{F_i}(v)\cap N_{F_i}(v')| \geq
\deg_{F_i}(v) + \deg_{F_i}(v') - \binom{n}{2} \geq
\deg_{F_i}(v) + \deg_{F_i}(x_i) - \frac{1}{2} n^2 \geq
\epsilon n^2.$$
Since $v$ is contained in at least $\sqrt{2\epsilon}n/6$ different $B_i$, we have
$$
|N_H(v) \cap N_H(v')| =
\sum_{i=1}^n |N_{F_i}(v)\cap N_{F_i}(v')|\geq
\sum_{v\in B_i} |N_{F_i}(v)\cap N_{F_i}(v')| \geq \frac{\epsilon\sqrt{2\epsilon}}{6} n^3.
$$

\begin{figure}[!htbp]
\begin{center}
\includegraphics[scale=0.15]{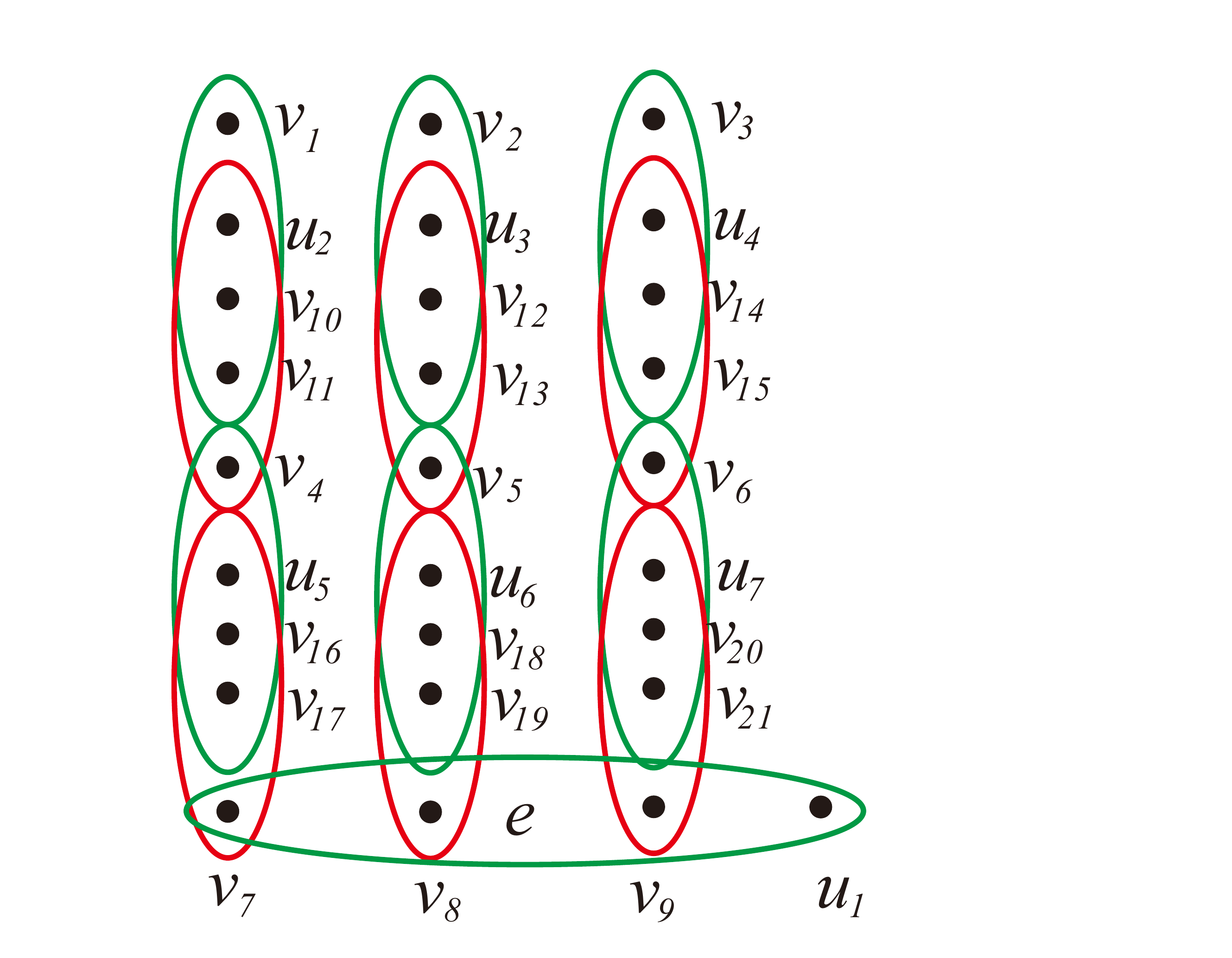}\\
\caption{An absorbing $24$-sets $T$ for $A$ ($T$ is red and$H[A \cup T]$ is green).}\label{Figure1}
\end{center}
\end{figure}

Next we prove that there are at least $\frac{\epsilon^{23/2}}{24!\times2^{37/2}}n^{24}
$ absorbing 24-sets for  $A$. First, we pick $v_4, v_5, v_6 \in C$ different from  $v_1, v_2, v_3$. Since $|C| > \sqrt{2\epsilon}n/2$, we have at least $\sqrt{2\epsilon}n/2-5$ choices for each of $v_4, v_5, v_6$. Second, we pick an  edge $e=\{v_7, v_8, v_9\}$ in $F_1$ disjoint with $\{v_1, v_2,\dots v_6\}$. Since $\delta_1(F_1)\geq \epsilon n^2$, we have $|E(F_1)|\geq \frac{\epsilon}{3} n^3$. Thus we have at least $\frac{\epsilon}{4} n^3$ choices for $e$. Finally, for each $i\in [7]\backslash\{1\}$, we pick $f_i=\{u_i, v_{2i+6}, v_{2i+7}\} \in N_H(v_{i-1})\cap N_H(v_{i+2})$ sequentially disjoint from $\{v_1, v_2,\dots v_{2i+5}\} \cup \{u_1, u_2,\dots u_{i-1}\}$ . Since one of $v_{i-1}$ and $v_{i+2}$ is contained in $\{v_4, v_5, v_6\} \subseteq C$, we have at least $\frac{\epsilon\sqrt{2\epsilon}}{8} n^3$ choices for each $f_i$. Let $T=\{v_4, v_5,\dots v_{21}\} \cup \{u_2, u_3,\dots u_7\}$. Then
$\{f_i\cup \{v_{i+2}\}\mid i\in [7]\backslash\{1\}\}$ is a perfect matching in $T$ and $\{f_i\cup \{v_{i-1}\}\mid i\in [7]\backslash\{1\}\} \cup \{u_1, v_7, v_8, v_9\}$ is a perfect matching in $T\cup A$. Thus $T$ is an absorbing $24$-set for $A$ and the number of choices of $T$ is at least
$$
\Big(\frac{\sqrt{2\epsilon}}{2}n-5\Big)^3\Big(\frac{\epsilon}{4} n^3\Big)\Big(\frac{\epsilon\sqrt{2\epsilon}}{8} n^3\Big)^6\frac{1}{24!}\geq
\frac{\epsilon^{23/2}}{24!\times2^{37/2}}n^{24}.
$$
\end{proof}

We need the following Chernoff bound.
\begin{lemma}\label{cher}\normalfont{[Chernoff bound\cite{Alon}]}
    Suppose $X_1, . . . , X_n$ are independent random variables taking values in $\{0, 1\}$. Let $X$ denote their sum and $\mu = \mathbb{E}[X]$ denote the expected value of $X$. Then for any $0 < \eta < 1$,
    \begin{align*}
    \mathbb{P}[X \geq (1 + \eta)\mu] < e^{-\eta^2\mu/3}
 \  \ \text{and} \ \
\mathbb{P}[X \leq (1 - \eta)\mu] < e^{-\eta^2\mu/2},
\end{align*}
and for any $\eta \geq 1$, $\mathbb{P}[X \geq (1 + \eta)\mu] < e^{-\eta\mu/3}$.
\end{lemma}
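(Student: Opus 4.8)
\noindent\emph{Proof idea for Lemma \ref{cher}.} The plan is to use the standard exponential moment (Bernstein) method. Write $p_i = \mathbb{P}[X_i = 1]$, so that $\mu = \sum_{i=1}^n p_i$; we may assume $\mu > 0$. For any real $t > 0$, Markov's inequality applied to the nonnegative variable $e^{tX}$ gives
\[
\mathbb{P}[X \ge (1+\eta)\mu] = \mathbb{P}\big[e^{tX} \ge e^{t(1+\eta)\mu}\big] \le e^{-t(1+\eta)\mu}\,\mathbb{E}\big[e^{tX}\big].
\]
By independence and the inequality $1 + x \le e^x$,
\[
\mathbb{E}\big[e^{tX}\big] = \prod_{i=1}^n \mathbb{E}\big[e^{tX_i}\big] = \prod_{i=1}^n \big(1 + p_i(e^t - 1)\big) \le \prod_{i=1}^n e^{p_i(e^t - 1)} = e^{(e^t - 1)\mu},
\]
so $\mathbb{P}[X \ge (1+\eta)\mu] \le \exp\!\big(\mu(e^t - 1 - t(1+\eta))\big)$ for every $t > 0$.

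The exponent is minimized at $t = \ln(1+\eta) > 0$, which yields the multiplicative form
\[
\mathbb{P}[X \ge (1+\eta)\mu] \le \Big(\frac{e^\eta}{(1+\eta)^{1+\eta}}\Big)^{\!\mu} = \exp\!\big(-\mu\,\varphi(\eta)\big), \qquad \varphi(\eta) := (1+\eta)\ln(1+\eta) - \eta .
\]
It then remains to bound $\varphi$ from below. For $0 < \eta < 1$ I would put $f(\eta) = \varphi(\eta) - \tfrac{\eta^2}{3}$: here $f(0) = 0$, $f'(\eta) = \ln(1+\eta) - \tfrac{2\eta}{3}$ with $f'(0) = 0$, and $f''(\eta) = (1+\eta)^{-1} - \tfrac23$ is positive on $(0,\tfrac12)$ and negative on $(\tfrac12,1)$; thus $f'$ increases from $0$ and then decreases, but $f'(1) = \ln 2 - \tfrac23 > 0$, so $f' > 0$ on $(0,1)$ and hence $f > 0$, i.e.\ $\varphi(\eta) > \tfrac{\eta^2}{3}$. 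For $\eta \ge 1$ I would put $g(\eta) = \varphi(\eta) - \tfrac{\eta}{3} = (1+\eta)\ln(1+\eta) - \tfrac43\eta$: then $g'(\eta) = \ln(1+\eta) - \tfrac13 \ge \ln 2 - \tfrac13 > 0$ and $g(1) = 2\ln 2 - \tfrac43 > 0$, so $\varphi(\eta) > \tfrac{\eta}{3}$. Substituting these into the multiplicative bound gives the first and third inequalities.

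The lower tail is handled symmetrically by applying Markov's inequality to $e^{-tX}$ with $t > 0$: as above one gets $\mathbb{P}[X \le (1-\eta)\mu] \le \exp\!\big(\mu(e^{-t} - 1 + t(1-\eta))\big)$, and the exponent is minimized at $t = -\ln(1-\eta) > 0$, which gives $\mathbb{P}[X \le (1-\eta)\mu] \le \exp\!\big(\mu\,\psi(\eta)\big)$ with $\psi(\eta) = -\eta - (1-\eta)\ln(1-\eta)$. Expanding $\ln(1-\eta) = -\sum_{k\ge1}\eta^k/k$ gives $(1-\eta)\ln(1-\eta) = -\eta + \sum_{k\ge 2}\frac{\eta^k}{k(k-1)} \ge -\eta + \tfrac{\eta^2}{2}$ for $0 < \eta < 1$, since all the omitted terms are positive; hence $\psi(\eta) \le -\tfrac{\eta^2}{2}$ and $\mathbb{P}[X \le (1-\eta)\mu] < e^{-\eta^2\mu/2}$.

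There is no real obstacle here: the argument is entirely routine, and the only points requiring any care are the two one-line optimizations over $t$ and the three elementary single-variable estimates for $\varphi$ and $\psi$ (each settled by a couple of derivatives or by comparing power series). Strictness of the displayed probability bounds is automatic, since each of those analytic inequalities is strict on the relevant range, and in the degenerate case where $(1\pm\eta)\mu \notin [0,n]$ the left-hand probability is simply $0$.
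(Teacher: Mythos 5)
The paper gives no proof of this lemma at all: it is quoted as a standard Chernoff bound and attributed to \cite{Alon}, so there is no argument of the authors' to compare against. Your exponential-moment derivation is the standard textbook proof and is correct throughout: the Markov step, the bound $\mathbb{E}[e^{tX}]\le e^{(e^t-1)\mu}$, the optimizations at $t=\ln(1+\eta)$ and $t=-\ln(1-\eta)$, and the three elementary estimates all check out (the tight spots being $f'(1)=\ln 2-\tfrac23>0$, $g(1)=2\ln 2-\tfrac43>0$, and the positivity of the omitted series terms for the lower tail, which also gives the claimed strictness when $\mu>0$). The only caveat is the degenerate case $\mu=0$, where the strict inequalities as stated actually fail (both sides equal $1$); your reduction to $\mu>0$ is the right reading of the lemma, and this is a defect of the quoted statement rather than of your proof.
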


Let $\mathscr{L}(A)$ denote the family of all those absorbing $24$-sets of $A$. By Claim \ref{c}, we know $|\mathscr{L}(A)|\geq \frac{\epsilon^{23/2}}{24!\times 2^{37/2}}n^{24}$ for any balanced $4$-set $A$. Let $\mathscr{T}$ be a family of balanced $24$-sets by selecting each of the $\binom{n}{18}\binom{n/3}{6}$ possible balanced $24$-sets independently with probability
\begin{align*}
    p = \frac{\rho n}{2\binom{n}{18}\binom{n/3}{6}}.
\end{align*}

\begin{claim}\label{p}
Let $X$ denote the number of intersecting pairs in $\mathscr{T}$. With positive probability, $\mathscr{T}$ satisfies all the following:
\begin{enumerate}[label=(\arabic*)]
  \item $|\mathscr{T}|\leq \rho n$.
\item $|\mathscr{T}\cap \mathscr{L}(A)| \geq \rho^{1.1}n$.
\item $X\leq \rho^{1.2}n$.
\end{enumerate}
\end{claim}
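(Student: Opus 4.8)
The plan is to apply the probabilistic deletion method, which is completely standard once Claim~\ref{c} is in hand. We select $\mathscr{T}$ by keeping each of the $\binom{n}{18}\binom{n/3}{6}$ balanced $24$-sets independently with probability $p=\rho n/\bigl(2\binom{n}{18}\binom{n/3}{6}\bigr)$, and we verify the three properties of Claim~\ref{p} hold simultaneously with positive probability; then we delete one $24$-set from each intersecting pair to obtain a sub-family $\mathscr{T}'$ of pairwise disjoint absorbing sets with $|\mathscr{T}'\cap\mathscr{L}(A)|\geq \rho^{1.1}n-\rho^{1.2}n>0$ for every balanced $4$-set $A$. The union of the (perfect matchings guaranteed inside the) sets in $\mathscr{T}'$, together with the edges needed to cover the leftover vertices within $\mathscr{T}'$, produces the desired matching $M'$.

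First I would establish Claim~\ref{p}. For (1), $\mathbb{E}|\mathscr{T}|=\rho n/2$, and since $|\mathscr{T}|$ is a sum of independent indicators, the Chernoff bound (Lemma~\ref{cher}) with $\eta=1$ gives $\mathbb{P}[|\mathscr{T}|>\rho n]<e^{-\rho n/6}\to 0$. For (2), fix a balanced $4$-set $A$; then $|\mathscr{T}\cap\mathscr{L}(A)|$ is again a sum of independent indicators with mean $\mu_A=p|\mathscr{L}(A)|\geq p\cdot\frac{\epsilon^{23/2}}{24!\,2^{37/2}}n^{24}=\Theta(\rho n)$, which is much larger than $\rho^{1.1}n$ since $\rho\ll\epsilon$; the lower-tail Chernoff bound gives $\mathbb{P}[|\mathscr{T}\cap\mathscr{L}(A)|<\rho^{1.1}n]<e^{-\Omega(\rho n)}$, and a union bound over the at most $n^4$ choices of $A$ still tends to $0$. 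For (3), the expected number of intersecting pairs is $\mathbb{E}[X]\leq p^2\cdot\binom{n}{18}\binom{n/3}{6}\cdot 24\cdot\binom{n}{17}\binom{n/3}{6}=O(\rho^2 n)$ (a chosen $24$-set meets $O\!\bigl(n^{23}\bigr)$ others, i.e.\ an $O(1/n)$ fraction of all $24$-sets), so $\mathbb{E}[X]=O(\rho^2n)\ll\rho^{1.2}n$, and Markov's inequality gives $\mathbb{P}[X>\rho^{1.2}n]\leq \mathbb{E}[X]/(\rho^{1.2}n)\to 0$. Since each of the three failure probabilities tends to $0$, for large $n$ all three events hold simultaneously with positive probability, proving Claim~\ref{p}.

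Next, fix an outcome $\mathscr{T}$ satisfying (1)--(3). Delete one set from each of the $\leq\rho^{1.2}n$ intersecting pairs and also delete any $24$-set that fails to lie inside the partition classes in the balanced way (there are none, by construction), obtaining a family $\mathscr{T}'$ of pairwise disjoint absorbing $24$-sets with $|\mathscr{T}'|\leq\rho n$ and, for every balanced $4$-set $A$, $|\mathscr{T}'\cap\mathscr{L}(A)|\geq\rho^{1.1}n-\rho^{1.2}n\geq 1$ for large $n$. By the definition of an absorbing $24$-set, each $T\in\mathscr{T}'$ satisfies that $H[T]$ has a perfect matching; let $M'$ be the union of these perfect matchings, so $M'$ is a matching in $H$ with $V(M')=\bigcup_{T\in\mathscr{T}'}T$ and $|M'|=6|\mathscr{T}'|<6\rho n$ (after renaming $\rho$, or simply choosing the selection probability smaller by a factor of $6$, we may assume $|M'|<\rho n$ as stated; I would absorb this harmless constant at the start).

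Finally I would verify the absorption property. Let $S\subseteq V(H)\setminus V(M')$ be balanced with $|S|<\rho' n$. Partition $S$ greedily into balanced $4$-sets $A_1,\dots,A_m$ (possible since $S$ is balanced and the partite structure forces $|S\cap P|=3|S\cap Q|$, so we may repeatedly pull out one vertex of $Q$ together with three of $P$); here $m=|S|/4<\rho' n/4$. Process the $A_j$ one at a time: having already used some absorbing sets, for $A_j$ there remains an absorbing $24$-set $T_j\in\mathscr{T}'\cap\mathscr{L}(A_j)$ disjoint from the $<24m<24\rho'n<\rho^{1.1}n$ previously used sets, because $|\mathscr{T}'\cap\mathscr{L}(A_j)|\geq\rho^{1.1}n$ and $\rho'\ll\rho$. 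Replace in $M'$ the perfect matching of $H[T_j]$ by the perfect matching of $H[A_j\cup T_j]$; doing this for all $j$ turns $M'$ (restricted to $V(M')\cup S$) into a perfect matching of $H[S\cup V(M')]$, since the untouched $T\in\mathscr{T}'$ still contribute perfect matchings of $H[T]$ and every vertex of $S$ is now covered. The only genuine point requiring care — and the part I would write out most carefully — is the counting in Claim~\ref{p}(2) and (3): one must make sure that the polynomial factors ($n^4$ choices of $A$ in the union bound, and the precise count of $24$-sets meeting a fixed one in (3)) are dominated by the exponential savings and by the slack between $\rho^{1.1}$, $\rho^{1.2}$ and the true means, which is exactly what the hierarchy $\rho'\ll\rho\ll\epsilon$ is set up to provide.
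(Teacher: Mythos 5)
Your proof of Claim~\ref{p} is correct and follows the paper's argument exactly: Chernoff plus a union bound for (1) and (2), and a first-moment computation with Markov's inequality for (3). One small imprecision: the Markov bound for (3) gives $\mathbb{P}[X>\rho^{1.2}n]=O(\rho^{0.8})$, a small constant rather than a quantity tending to $0$ with $n$, which is precisely why the claim asserts only ``positive probability'' (the paper gets $1/2-o(1)$); your conclusion is unaffected.
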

\begin{proof}
First we observe that  $\mathbb{E}|\mathscr{T}|=\rho n/2$ and $\mathbb{E}|\mathscr{T}\cap \mathscr{L}(A)| = p|\mathscr{L}(A)| \geq 2\rho^{1.1}n$. By Lemma \ref{cher}, with probability $1-o(1)$, both $|\mathscr{T}|\leq \rho n$ and $|\mathscr{T}\cap \mathscr{L}(A)| \geq \rho^{1.1}n$ hold  for all balanced $A \in \binom{V(H)}{4}$.\\
Let $X$ denote the number of intersecting pairs in $\mathscr{T}$. We have $$
\mathbb{E}X= \binom{n/3}{6}\binom{n}{18} \left( 6\binom{n/3-1}{5}\binom{n}{18}+18\binom{n/3}{6}\binom{n-1}{17}\right)p^2 \leq
\frac{\rho^{1.2}n}{2}.
$$
Therefore  by Markov's inequality, we have
$$
\mathbb{P}[X\geq \rho^{1.2}n] \leq
\frac{\mathbb{E}X}{\rho^{1.2}n} \leq
\frac{\rho^{1.2}n/2}{\rho^{1.2}n} =
\frac{1}{2}.
$$
Hence with probability $\frac{1}{2} - o(1)$, $\mathscr{T}$ simultaneously satisfies (1), (2) and (3).
\end{proof}
Let $\mathscr{T}'$ be the collection obtained from $\mathscr{T}$ by removing all intersecting and non-absorbing $24$-sets. Thus  $\mathscr{T}'$ consists of pairwise disjoint absorbing $24$-sets. For any balanced $A \in \binom{V(H)}{4}$, we have the following estimate:
$$
|\mathscr{T}'\cap \mathscr{L}(A)| \geq \rho^{1.1}n-\rho^{1.2}n  \geq \frac{\rho^{1.1}n}{2}.
$$
For any balanced $S\subseteq V(H)$ with $|S| = 4s \leq \rho'n$, we divide $S$ into $s$ balanced $4$-sets, say $\{A_1, A_2, \dots, A_s\}$. Since for each $t \in [s]$, we have  $|\mathscr{T}'\cap \mathscr{L}(A_t)| \geq \rho^{1.1}/2$, and given that $\rho' \ll \rho$, we can greedily pick distinct 24-sets $L_1, L_2, \dots, L_s\in \mathscr{T}'$ such that $L_t \in \mathscr{L}(A_t)$ for all $t \in [s]$. Consequently, $V(M')\cup S$ has a perfect matching.\hfill $\square$

\section{Fractional perfect matching}

Let $H$ be a hypergraph, $p:V(H)\rightarrow [0,1]$ and $q:E(H)\rightarrow [0,1]$. If $\sum\limits_{x\in e}p(x) \geq 1$ for any $e\in E(H)$, then we say that $p$ is a \emph{fractional vertex cover} of $H$.
If $\sum\limits_{x\in e}q(e) \leq 1$ for any $x\in V(H)$, then we say that $q$ is a \emph{ fractional matching} of $H$. We denote the minimum value of a fractional vertex cover of $H$ by:
$$
\tau^*(H)= \min\limits_{p}\sum_{x\in V(H)}p(x),$$
and the maximum value of a fractional matching of $H$ by:
$$
\nu^*(H)= \max\limits_{q}\sum_{e\in E(H)}q(e).$$
By the Strong Duality Theorem of linear programming, we have $\tau^*(H) = \nu^*(H)$. Further, if $\sum\limits_{x\in V(H)}p(x)=\tau^*(H) =n = \nu^*(H) = \sum\limits_{e\in E(H)}q(e)$, we say that $p$ is a \emph{ fractional perfect vertex cover} of $H$ and $q$ is a \emph{ fractional perfect matching} of $H$.

In this section, we will prove the following lemma.

\begin{lemma}\label{f}
There exists $n_0 \in \mathbb{N}$ such that the following holds for all integers $n\ge n_0$ that are divisible by $3$. Let $H$ be a $(1, 3)$-partite $4$-graph with partition classes $Q$, $P$ such that $|P| = n$ and $|Q| = n/3$. Suppose every vertex in $Q$ is adjacent to every vertex in $P$ in $H$. Additionally, assume that for any $u_i\in Q$ and $v_j,v_k\in P$, if $\deg(\{u_i,v_j,v_k\}) > 0$, then $\deg(\{u_i,v_j\}) + \deg(\{u_i,v_k\}) >  \frac{2}{3}n^2-\frac{8}{3}n+2$. Then $H$ has a fractional perfect matching.
\end{lemma}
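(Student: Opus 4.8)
The plan is to exhibit a fractional perfect matching directly by LP duality: it suffices to show that $\tau^*(H) \ge n + n/3 = |V(H)|$, i.e., that every fractional vertex cover $p$ of $H$ has total weight at least $|V(H)|$; combined with the trivial bound $\tau^*(H) = \nu^*(H) \le |V(H)|$ (since each edge meets $4$ vertices and any fractional matching assigns total weight at most $|V(H)|$... more precisely $\nu^*\le |V(H)|/4\cdot 4$ is not tight, so the right statement is $\nu^* \le n/3$ trivially from $Q$, hence we instead aim to show $\nu^*(H) = n/3$, which is the correct normalization for a $(1,3)$-partite $4$-graph with $|Q|=n/3$). Let me restate: a fractional perfect matching here should satisfy $\sum_e q(e) = |Q| = n/3$ with each vertex covered to total weight exactly $1$; equivalently $\tau^*(H) = n/3$. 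So the goal is to prove $\tau^*(H) \ge n/3$, and then any minimum fractional cover has weight exactly $n/3$, forcing (by complementary slackness and the fact that every vertex of $Q$ lies in an edge) a fractional perfect matching.

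The main step is thus: suppose for contradiction that there is a fractional vertex cover $p$ with $\sum_{x} p(x) < n/3$. First I would record structural consequences of the degree-sum hypothesis. For each $u_i \in Q$, write $F_i = N_H(u_i)$, a $3$-graph on $P$ with no isolated vertex and $\sigma_2(F_i) > \frac{2}{3}n^2 - \frac{8}{3}n + 2 = \sigma_2(H^2_{n,n/3})$. The key point is that such an $F_i$ has a near-perfect structure; in particular I expect to show each $F_i$ itself has a fractional perfect matching (value $n/3$), or at least a fractional matching of value $(1-o(1))n/3$, using the same kind of argument as in Theorem \ref{theorem1}'s toolkit — e.g. that $F_i$ has minimum vertex degree close to $\binom{n-1}{2}-\binom{2n/3}{2}$ on all but a few vertices, or more cleanly, that $\delta_1(F_i)$ together with the $\sigma_2$ bound rules out the existence of a small cover. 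Then I would use a weighted/averaging argument across $i \in [n/3]$: assign to edge $e\cup\{u_i\} \in E(H)$ the value $q(e\cup\{u_i\}) = \frac{1}{n/3}\, q_i(e)$ where $q_i$ is a fractional perfect matching of $F_i$; this respects the constraint at each $v_j \in P$ (sum over $i$ of $\frac1{n/3}$ times the $F_i$-load $\le \frac{1}{n/3}\cdot\frac{n}{3} = 1$) and at each $u_i$ ($\sum_e q_i(e) \cdot \frac{1}{n/3} \cdot$... wait — this needs $u_i$ to receive total weight $1$, so actually take $q(e\cup\{u_i\}) = q_i(e)/(n/3)\cdot$ — no: we want $\sum_{e} q_i(e)/(\text{something}) = 1$ at $u_i$; since $\sum_e q_i(e) = n/3$, set $q(e\cup\{u_i\}) = \frac{3}{n} q_i(e)$, giving load $1$ at $u_i$ and load $\le \frac{3}{n}\cdot\frac{n}{3}=1$ at each $v_j$, and total $\sum = \frac{3}{n}\cdot\frac{n}{3}\cdot\frac{n}{3} = n/3$). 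This produces a genuine fractional perfect matching of $H$.

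So the real content reduces to the single-hypergraph statement: \emph{every $3$-graph $F$ on $n$ vertices with no isolated vertex and $\sigma_2(F) > \frac{2}{3}n^2 - \frac{8}{3}n + 2$ has a fractional perfect matching}. I would prove this by LP duality within $F$: assume a fractional cover $w: P \to [0,1]$ with $\sum w(v) < n/3$, sort $P$ as $w(v_1) \le \dots \le w(v_n)$, and derive a contradiction by locating a vertex $x$ of minimum degree, noting its neighborhood $B = \{v: \deg_F(\{v,x\})>0\}$ is nonempty, and playing the constraint $w(x)+w(v)+w(v') \ge 1$ for edges against the density forced by $\deg_F(x) + \deg_F(y) > \frac23 n^2 - \frac83 n + 2$ for $y \in B$ — precisely the counting already used in Claim \ref{c} that shows $\binom{|B|}{2} \ge \deg_F(x)$ and that low-degree vertices still have large codegree with a positive-fraction set $C \subseteq P$. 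The anticipated obstacle is making this cover argument tight enough to get the exact threshold $\frac23 n^2 - \frac83 n + 2$ rather than an asymptotic one (the ambient Theorem \ref{theorem3} only needs the asymptotic version, but Lemma \ref{f} is stated sharply); I expect to handle this by a case split on whether $\delta_1(F)$ is "large" (comparable to $\binom{n-1}{2}$), in which case the space barrier argument of K\"uhn–Osthus–Treglown applies, or "small", in which case the $\sigma_2$ condition forces all non-minimal vertices to have very high degree and one argues the cover must have weight $\ge n/3$ by a direct summation over a perfect-fractional-matching-witnessing collection of edges, mirroring how $H^2_{n,n/3}$ is the unique near-extremal configuration.
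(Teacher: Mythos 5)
Your reduction is sound: for each $u_i \in Q$ the link $F_i = N_H(u_i)$ is a $3$-graph on $P$ with no isolated vertex (since every vertex of $Q$ is adjacent to every vertex of $P$) and $\sigma_2(F_i) > \frac{2}{3}n^2 - \frac{8}{3}n + 2$, and if each $F_i$ admits a fractional perfect matching $q_i$, then setting $q(e \cup \{u_i\}) := \frac{3}{n}\, q_i(e)$ gives every vertex of $H$ load exactly $1$, i.e.\ a fractional perfect matching of $H$ (your normalization detour resolves correctly: the right value is $\nu^*(H) = n/3$). The genuine gap is that you never establish the per-link statement you reduce to: your last paragraph is a hedged sketch of a bespoke LP-duality/extremal argument (``I expect to handle this by a case split\dots'') that is not carried out, and the fallback ``or at least a fractional matching of value $(1-o(1))n/3$'' would not suffice --- with only near-perfect $q_i$ the averaged $q$ is not a fractional perfect matching, and the downstream use in Lemma~\ref{44} requires genuine fractional perfect matchings of the $H[R^i]$. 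The exact-threshold difficulty you anticipate is precisely the content of Theorem~\ref{theorem1}, which you should simply invoke: each $F_i$ satisfies its hypotheses verbatim ($n\ge n_0$, $3 \mid n$, no isolated vertex, $\sigma_2(F_i) > \frac{2}{3}n^2-\frac{8}{3}n+2$), so each $F_i$ has an integer, hence fractional, perfect matching, and your averaging then finishes the proof; re-proving a sharp fractional analogue of Theorem~\ref{theorem1} from scratch is unnecessary and is exactly where your write-up stalls.

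With that single citation inserted, your argument closes and is genuinely different from (and shorter than) the paper's proof. The paper instead fixes a minimum fractional vertex cover $p$, passes to the auxiliary hypergraph $H'$ of all balanced $4$-sets of cover-weight at least $1$, prunes it to a stable hypergraph $H''$ with $E(H)\subseteq E(H'')\subseteq E(H')$ still satisfying the degree-sum condition, applies Theorem~\ref{theorem1} once to the link $N_{H''}(u_1)$, and uses stability to lift that perfect matching to a perfect matching of $H''$, concluding $\nu^*(H)=\nu^*(H'')=n/3$. Your route applies Theorem~\ref{theorem1} to every link and takes a convex combination, avoiding the cover/stability machinery entirely; both arguments ultimately rest on Theorem~\ref{theorem1}, but as submitted yours is an outline whose central lemma is missing rather than a complete proof.
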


\begin{remark}
    Lemma \ref{f} is the fractional version of Problem \ref{conjecture1}. In Lemma \ref{f} we only obtain a fractional perfect matching.
\end{remark}

\n{\bf Proof of Lemma \ref{f}}
Let $p$ be a minimum fractional vertex cover of $H$. Let $P= \{v_1, ..., v_{n}\}$, $Q = \{u_1, ..., u_{n/3}\}$ and suppose that $p(v_1)\leq p(v_2)\leq \dots \leq p(v_n)$ and $p(u_1)\leq p(u_2)\leq \dots \leq p(u_{n/3})$. Let $H'$ be the  $(1,3)$-partite $4$-graph with vertex set $V(H)$ and edge set

 \begin{align*}
   E(H')= \left\{ e\in \binom{V(H)}{4} : |e\cap Q |=1
 \ \text{and} \
\sum_{v\in e}p(v)\geq 1\right\}.
\end{align*}
It is not difficult to see that $E(H)\subseteq E(H')$.

\begin{claim}\label{1}
    For any $4$-graph $H''$ with $V(H'')=V(H)$, if $E(H)\subseteq E(H'')\subseteq E(H')$, then $\nu^*(H)= \nu^*(H'')$. In particular, $H''$ has a fractional perfect matching if and only if $H$ has a fractional perfect matching.
\end{claim}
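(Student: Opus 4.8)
The plan is to show the two bounds $\nu^*(H) \le \nu^*(H'')$ and $\nu^*(H'') \le \nu^*(H)$ separately, and then deduce the ``in particular'' statement from the fact that $|V(H)| = n$ is fixed and a fractional perfect matching is exactly a fractional matching of total value $n$. The first inequality $\nu^*(H) \le \nu^*(H'')$ is immediate: since $E(H) \subseteq E(H'')$, every fractional matching $q$ of $H$ extends (by setting $q(e) = 0$ on the new edges) to a fractional matching of $H''$ with the same total value, so $\nu^*(H'') \ge \nu^*(H)$.

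For the reverse inequality I would pass to the dual and work with $\tau^*$ instead of $\nu^*$, using Strong Duality $\tau^* = \nu^*$ as recorded in the text. The key point is that the function $p$ fixed at the start of the proof of Lemma~\ref{f} — a minimum fractional vertex cover of $H$, so $\sum_{x} p(x) = \tau^*(H) = \nu^*(H)$ — is \emph{also} a fractional vertex cover of $H''$. Indeed, by the very definition of $E(H')$, every edge $e \in E(H')$ satisfies $\sum_{v \in e} p(v) \ge 1$; since $E(H'') \subseteq E(H')$, the same holds for every edge of $H''$. Hence $p$ is a feasible fractional vertex cover of $H''$, giving $\tau^*(H'') \le \sum_x p(x) = \tau^*(H)$, i.e.\ $\nu^*(H'') \le \nu^*(H)$. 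Combining the two bounds yields $\nu^*(H) = \nu^*(H'')$.

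Finally, for the ``in particular'' clause: $H$ (resp.\ $H''$) has a fractional perfect matching exactly when $\nu^*$ equals $|V(H)| = n$ (the largest value a fractional matching can have, attained only by a fractional \emph{perfect} matching, since every vertex contributes weight at most $1$ and a $4$-uniform fractional matching of value $n$ forces $\sum_{e \ni x} q(e) = 1$ for all $x$). Since $\nu^*(H) = \nu^*(H'')$ and both ambient vertex sets have size $n$, one equals $n$ iff the other does. This step is routine; the only substantive idea is the observation that the \emph{same} optimal cover $p$ used to define $H'$ certifies optimality of $\tau^*$ for every intermediate hypergraph, and there is no real obstacle here beyond being careful that $p$ is fixed \emph{before} $H'$ is defined so that the monotonicity $E(H) \subseteq E(H'') \subseteq E(H')$ can be exploited in the cover direction.
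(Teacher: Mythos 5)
Your proposal is correct and follows essentially the same route as the paper: the crux in both is that the minimum fractional vertex cover $p$ of $H$, fixed before $H'$ is defined, covers every edge of $H'$ and hence of $H''$, giving $\tau^*(H'')\le\tau^*(H)$, while the reverse inequality comes from $E(H)\subseteq E(H'')$ (you argue it on the primal side by extending a fractional matching by zero, the paper on the dual side by noting a cover of $H''$ is a cover of $H$ — these are interchangeable). The only nitpick is the normalization in your last paragraph: for a $4$-uniform hypergraph the relevant threshold is $\nu^*=|V(H)|/4$ (here $n/3$), not $|V(H)|$, but this does not affect the argument since the vertex sets of $H$ and $H''$ coincide.
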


\begin{proof}
Let $p''$ be a minimum fractional vertex cover of $H''$. Since $p$ is a fractional vertex cover of $H'$, $p$ is also a fractional vertex cover of $H''$. Then $\sum\limits_{x\in V(H'')}p(x)\geq \sum\limits_{x\in V(H'')}p''(x)$. Since $V(H)=V(H'')$, we obtain  $\sum\limits_{x\in V(H)}p(x)\geq \sum\limits_{x\in V(H)}p''(x)$. Since $p$ is a minimum fractional vertex cover of $H$ and $p''$ is also a fractional vertex cover of $H$, we have $\sum\limits_{x\in V(H)}p(x)\leq \sum\limits_{x\in V(H)}p''(x)$. Hence, $\sum\limits_{x\in V(H)}p(x)=\sum\limits_{x\in V(H'')}p''(x)$ and it follows that $\tau^*(H)=\tau^*(H'')$. Then $\nu^*(H)= \nu^*(H'')$.
\end{proof}

\begin{definition}
Let $H_1$ be a $(1,3)$-partite $4$-graph with partition classes $Q = \{u_1, u_2, \dots, u_{n/3}\}$ and $P = \{v_1, v_2, \dots, v_n\}$. For $e=\{u_i, v_{j_1}, v_{j_2}, v_{j_3}\}\in \binom{Q}{1}\times\binom{P}{3}$ and $e'=\{u_{i'}, v_{j_1'}, v_{j_2'}, v_{j_3'}\} \in \binom{Q}{1}\times\binom{P}{3}$, we say $e \prec e'$ if $1 \leq i \leq i' \leq n/3$ and $1 \leq j_k \leq j_k' \leq n$ for all $k \in [3]$.
\end{definition}

\begin{definition}
For a $(1,3)$-partite $4$-graph $H_1$ with partition classes $Q = \{u_1, u_2, \dots, u_{n/3}\}$ and $P = \{v_1, v_2, \dots, v_n\}$, we say that $H_1$ is \textit{stable} if for any $e, e' \in \binom{Q}{1}\times\binom{P}{3}$ satisfying $e \prec e'$, we have:
\[
e \in E(H_1) \implies e' \in E(H_1).
\]
\end{definition}

Note that $H'$ is stable with respect to the given labeling of $Q$ and $P$.

\begin{claim}\label{2}
There exists a $(1,3)$-partite $4$-graph $H''$ with vertex set $V(H)$ satisfying:
(1) $E(H) \subseteq E(H'') \subseteq E(H')$; (2) $H''$ is stable with respect to the given labeling; (3) For any $u_i \in Q$ and $v_j, v_k \in P$ with $\deg_{H''}(\{u_i,v_j,v_k\}) > 0$, we have:
\[
\deg_{H''}(\{u_i,v_j\}) + \deg_{H''}(\{u_i,v_k\}) > \frac{2}{3}n^2 - \frac{8}{3}n + 2.
\]
\end{claim}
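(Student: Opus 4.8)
The plan is to obtain $H''$ from $H'$ by a \emph{symmetrization} (compression) argument, starting from $H$ and pushing edges ``upward'' in the partial order $\prec$ while keeping control of the edge set. Concretely, I would define $H''$ to be the unique stable $(1,3)$-partite $4$-graph whose edge set is the \emph{up-closure} of $E(H)$ inside $\binom{Q}{1}\times\binom{P}{3}$, i.e.
\[
E(H'') = \{\, e' : \exists\, e \in E(H) \text{ with } e \prec e' \,\}.
\]
Property (2) is then immediate: if $e \prec e'$ and $e \in E(H'')$, then $e$ lies above some edge of $H$, hence so does $e'$ by transitivity of $\prec$. Property (1) also follows quickly: $E(H) \subseteq E(H'')$ is trivial, and $E(H'') \subseteq E(H')$ holds because $H'$ is stable and contains $E(H)$, so its up-closure is contained in $E(H')$ itself. (Here I use the observation recorded just before the claim that $H'$ is stable with respect to the given labeling, together with the fact that $p(v_1)\le\dots\le p(v_n)$ and $p(u_1)\le\dots\le p(u_{n/3})$ were arranged precisely so that $\sum_{v\in e}p(v)\ge 1$ is preserved under $\prec$.)

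The substance of the claim is property (3): the degree-sum condition must survive the compression. I would argue this by comparing pair-degrees before and after. Fix $u_i$ and $v_j,v_k$ with $\deg_{H''}(\{u_i,v_j,v_k\})>0$; pick an edge $e^\star = \{u_i,v_j,v_k,\ast\}\in E(H'')$ containing this triple. By definition of $E(H'')$ there is $e^\circ \prec e^\star$ with $e^\circ\in E(H)$; write $e^\circ = \{u_{i_0},v_{j_0},v_{k_0},\ast_0\}$ with $i_0\le i$ and the three $P$-indices of $e^\circ$ coordinatewise below those of $e^\star$. The key monotonicity step is: for a stable $(1,3)$-partite $4$-graph, if $v_a,v_b \in P$ with $a\le a'$ and $b \le b'$, and $u_{\ell}\in Q$ with $\ell \le \ell'$, then $\deg_{H''}(\{u_{\ell},v_a\}) \le \deg_{H''}(\{u_{\ell'},v_{a'}\})$ — replacing a vertex by a ``larger'' one in the ordering can only increase the degree, because stability means the up-shift of any edge through $u_\ell,v_a$ is again an edge. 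Hence
\[
\deg_{H''}(\{u_i,v_j\}) + \deg_{H''}(\{u_i,v_k\})
\;\ge\; \deg_{H''}(\{u_{i_0},v_{j_0}\}) + \deg_{H''}(\{u_{i_0},v_{k_0}\})
\;\ge\; \deg_{H}(\{u_{i_0},v_{j_0}\}) + \deg_{H}(\{u_{i_0},v_{k_0}\}),
\]
the last inequality because $E(H)\subseteq E(H'')$. Since $e^\circ\in E(H)$ we have $\deg_H(\{u_{i_0},v_{j_0},v_{k_0}\})>0$, so the hypothesis of the claim (inherited from the hypothesis of Lemma~\ref{f}) gives that the right-hand side exceeds $\frac23 n^2 - \frac83 n + 2$, which is what we need.

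The main obstacle I anticipate is making the monotonicity step fully rigorous: one must verify that a ``coordinatewise up-shift'' applied to the pair $\{u_\ell,v_a\}$ together with an arbitrary completion to an edge through that pair always lands on a valid $\prec$-comparison, so that stability can be invoked. This needs a small but careful bookkeeping argument handling the case where $v_{a'}$ happens to coincide with one of the other vertices of the edge being shifted (one then shifts that colliding coordinate further up), and the analogous issue on the $Q$-side; essentially one builds an injection from edges through $\{u_\ell,v_a\}$ to edges through $\{u_{\ell'},v_{a'}\}$. Everything else — properties (1) and (2), and assembling the three inequalities above — is routine once this injection is in hand. Note also that I only need monotonicity in the two coordinates actually being moved ($u_i$ versus $u_{i_0}$, and $v_j$ versus $v_{j_0}$), not simultaneous control of all indices, which keeps the bookkeeping manageable.
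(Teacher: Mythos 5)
Your construction is correct, and it is a genuinely different route from the paper's. The paper works top--down: it starts from $H_0=H'$ and iteratively deletes all edges through a violating triple $\{u_{i_t},v_{j_t},v_{k_t}\}$ chosen to minimize $i_t+j_t+k_t$, then argues (i) no edge of $H$ is ever removed, because a triple violating the degree-sum bound in $H_t$ violates it in $H$ as well and hence, by the hypothesis of Lemma~\ref{f}, has codegree $0$ in $H$, and (ii) stability is preserved, via the minimality of $i_t+j_t+k_t$ combined with the fact that in a stable graph pair degrees only decrease when indices are lowered. You work bottom--up, taking $H''$ to be the $\prec$-up-closure of $E(H)$ (the minimal stable graph containing $E(H)$), so (1) and (2) are immediate, and (3) reduces to the monotonicity lemma $\deg_{H''}(\{u_\ell,v_a\})\le\deg_{H''}(\{u_{\ell'},v_{a'}\})$ for $\ell\le\ell'$, $a\le a'$ in a stable $(1,3)$-partite $4$-graph. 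Your route avoids the iteration/termination bookkeeping and yields a canonical $H''$, at the price of having to prove the injection lemma; note that this same monotonicity fact is exactly what the paper invokes (without proof) inside its stability-preservation step, so the two arguments share that kernel.

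Two points in your sketch need tightening, both fixable. First, in the monotonicity injection, when $v_{a'}$ already lies in an edge $e\supseteq\{u_\ell,v_a\}$, do not ``shift the colliding coordinate further up'' (it is unclear to which vertex, and injectivity becomes awkward); instead map $e$ to $(e\setminus\{u_\ell\})\cup\{u_{\ell'}\}$, which is an edge by stability and contains $\{u_{\ell'},v_{a'}\}$, while in the non-colliding case map $e$ to $(e\setminus\{u_\ell,v_a\})\cup\{u_{\ell'},v_{a'}\}$; images of the two cases are distinguished by whether they contain $v_a$, which gives injectivity. Second, from $e^\circ\prec e^\star$ you should record the short case check (according to the positions of $j$ and $k$ in the sorted $P$-indices of $e^\star$) that two \emph{distinct} vertices $v_{j_0},v_{k_0}\in e^\circ$ with $j_0\le j$ and $k_0\le k$ can indeed be selected; this is routine but does not follow verbatim from ``coordinatewise below.'' With these repairs your proof is complete and yields the bound $\deg_{H''}(\{u_i,v_j\})+\deg_{H''}(\{u_i,v_k\})>\frac{2}{3}n^2-\frac{8}{3}n+2$ as required.
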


\begin{proof}
We construct $H''$ by an iterative process. Let $H_0 = H'$ and for $t \geq 0$, we proceed as follows:

\begin{enumerate}
\item Check if the following condition holds for all $u_i \in Q$ and $v_j, v_k \in P$ with $\deg_{H_t}(\{u_i,v_j,v_k\}) > 0$:
\[
\deg_{H_t}(\{u_i,v_j\}) + \deg_{H_t}(\{u_i,v_k\}) > \frac{2}{3}n^2 - \frac{8}{3}n + 2.
\]
If yes, set $H'' = H_t$ and stop.

\item Otherwise, choose $u_{i_t} \in Q$ and $v_{j_t}, v_{k_t} \in P$ that: (i) Satisfy $\deg_{H_t}(\{u_{i_t},v_{j_t},v_{k_t}\}) > 0$; (ii) Have $\deg_{H_t}(\{u_{i_t},v_{j_t}\}) + \deg_{H_t}(\{u_{i_t},v_{k_t}\}) \leq \frac{2}{3}n^2 - \frac{8}{3}n + 2$;
(iii) Minimize $i_t + j_t + k_t$ among all such triples.
Define $H_{t+1}$ by $V(H_{t+1}) = V(H)$ and $E(H_{t+1}) = E(H_t) \setminus \{e \in E(H_t) : \{u_{i_t}, v_{j_t}, v_{k_t}\} \subseteq e\}$.

\end{enumerate}

The algorithm ends in finite steps, since we remove at least one edge in each iteration. We prove by induction that for all $t$,  $H_t$ is stable and $E(H) \subseteq E(H_t)$. This is true for $t = 0$ since $H_0 = H'$. Assume that the statement holds for $t\geq 0$. For $t+1$,  first we show $E(H) \subseteq E(H_{t+1})$. Note that:
\[
\begin{aligned}
\deg_H(\{u_{i_t},v_{j_t}\}) + \deg_H(\{u_{i_t},v_{k_t}\}) &\leq \deg_{H_t}(\{u_{i_t},v_{j_t}\}) + \deg_{H_t}(\{u_{i_t},v_{k_t}\})\leq \frac{2}{3}n^2 - \frac{8}{3}n + 2.
\end{aligned}
\]
Thus, $\deg_H(\{u_{i_t},v_{j_t},v_{k_t}\}) = 0$, meaning no edge containing $\{u_{i_t}, v_{j_t}, v_{k_t}\}$ belongs to $E(H)$.

Secondly, we prove that $H_{t+1}$ is stable. Suppose not, then there exists two edges $e \notin E(H_{t+1})$ and $e' \in E(H_{t+1})$ such that $e' \prec e$. Since $E(H_{t+1}) \subset E(H_{t})$ and $H_t$ is stable, we have $e \in E(H_{t})\backslash E(H_{t+1})$ and $e' \notin E(H_{t})\backslash E(H_{t+1})$. By Step (2) of the construction process, we have $\{u_{i_t}, v_{j_t}, v_{k_t}\} \subset e$ and $\{u_{i_t}, v_{j_t}, v_{k_t}\} \not\subset e'$. Since we have $e' \prec e$, there exist $i_t' \leq i_t$, $j_t' \leq j_t$, and $k_t' \leq k_t$ with $i_t' + j_t' + k_t' < i_t + j_t + k_t$ such that $\{u_{i_t'}, v_{j_t'}, v_{k_t'}\} \subset e'$.
Since $H_t$ is stable, we have $\deg_{H_t}(\{u_{i_t'},v_{j_t'}\}) \leq \deg_{H_t}(\{u_{i_t},v_{j_t}\})$ and
$\deg_{H_t}(\{u_{i_t'},v_{k_t'}\}) \leq \deg_{H_t}(\{u_{i_t},v_{k_t}\}),
$
which implies that \[
\deg_{H_t}(\{u_{i_t'},v_{j_t'}\}) + \deg_{H_t}(\{u_{i_t'},v_{k_t'}\}) \leq \frac{2}{3}n^2 - \frac{8}{3}n + 2.
\]
Also, since $\{u_{i_t'},v_{j_t'},v_{k_t'}\} \subset e'$, we have \[
\deg_{H_t}(\{u_{i_t'},v_{j_t'},v_{k_t'}\}) > 0.
\]
But we have $i_t' + j_t' + k_t' < i_t + j_t + k_t$, which leads to a contradiction to our choice of $\{u_{i_t}, v_{j_t}, v_{k_t}\}$ in step (2). This contradiction establishes that $H_{t+1}$  is still stable, which completes  the proof.
\end{proof}

%we only need to verify that for any edges $e, e' \in E(H_t)$ with: (i) $e = \{u_a, v_{b_1}, v_{b_2}, v_{b_3}\}$; (ii) $e' = \{u_{a'}, v_{b_1'}, v_{b_2'}, v_{b_3'}\}$; (iii) $1 \leq a \leq a' \leq \frac{n}{3}$ and $1 \leq b_x \leq b_x' \leq n$ for $x \in [3]$.
%If $\{u_{i_t}, v_{j_t}, v_{k_t}\} \subset e'$, then $\{u_{i_t}, v_{j_t}, v_{k_t}\} \subset e$.
%Suppose not, then there exist $i_t' \leq i_t$, $j_t' \leq j_t$, and $k_t' \leq k_t$ with $i_t' + j_t' + k_t' < i_t + j_t + k_t$ such that $e$ contains $\{u_{i_t'}, v_{j_t'}, v_{k_t'}\}$. By our choice of $\{u_{i_t}, v_{j_t}, v_{k_t}\}$:
%\[
%\deg_{H_t}(\{u_{i_t'},v_{j_t'}\}) + \deg_{H_t}(\{u_{i_t'},v_{k_t'}\}) > \frac{2}{3}n^2 - \frac{8}{3}n + 2.
%\]
%But since $H_t$ is stable, we have $\deg_{H_t}(\{u_{i_t'},v_{j_t'}\}) \leq \deg_{H_t}(\{u_{i_t},v_{j_t}\})$ and
%$\deg_{H_t}(\{u_{i_t'},v_{k_t'}\}) \leq \deg_{H_t}(\{u_{i_t},v_{k_t}\}),
%$
%which implies that \[
%\deg_{H_t}(\{u_{i_t'},v_{j_t'}\}) + \deg_{H_t}(\{u_{i_t'},v_{k_t'}\}) \leq \frac{2}{3}n^2 - \frac{8}{3}n + %2.
%\]
%This leads to a contradiction.
%\end{proof}

By Claim \ref{1}, it suffices to prove that the hypergraph $H''$ constructed in Claim \ref{2} has a fractional perfect matching. Let $F' = N_{H''}(u_1)$ be the neighborhood of vertex $u_1$ in $H''$. For any pair of vertices $v_j, v_k \in P$ that are adjacent in $F'$, we have:
    \[
        \deg_{F'}(v_j) + \deg_{F'}(v_k) > \frac{2}{3}n^2 - \frac{8}{3}n + 2.
    \]
By Theorem \ref{theorem1}, this degree condition guarantees that $F'$ contains a perfect matching. Let us denote this perfect matching as: $\{e_1, e_2, \dots, e_{n/3}\}$.
Since $H''$ is stable, we can extend this perfect matching to $H''$ by adding vertices from $Q$ in order:
        $\{e_1 \cup \{u_1\}, e_2 \cup \{u_2\}, \dots, e_{n/3} \cup \{u_{n/3}\}\}$. This forms a perfect matching in $H''$.  Therefore: $ \nu^*(H) = \nu^*(H'') = n/3$, which proves that $H$ contains a fractional perfect matching.
\hfill $\square$

\section{Almost perfect matching}
Similar to \cite{Lu}, we will find an almost perfect matching in $H$ using  probabilistic methods. The following lemma (Lemma 5.2 in \cite{Lu}) will play a key role in our analysis.

\begin{lemma}\label{41}\cite{Lu}
Let $n$ be a sufficiently large positive integer and let $H$ be a $(1, 3)$-partite $4$-graph with partition classes $Q$, $P$ such that $3|Q| = |P| = n$. Let $S \subseteq V (H)$ be a set of vertices such that $|S \cap Q| = n^{0.99}/3$ and $|S \cap P| = n^{0.99}$. Take $n^{1.1}$ independent copies of
$R_+$ and denote them by $R^i_
+$, $1 \leq i \leq n^{1.1}$, where $R_+$ is chosen from $V (H)$ by taking each
vertex uniformly at random with probability $n^{-0.9}$. Define $R^i_- = R^i_+ \backslash S$ for $1 \leq i \leq n^{1.1}$.

Then, with probability $1 - o(1)$, for any sequence $R^i$, $1 \leq i \leq n^{1.1}$, satisfying $R^i_- \subseteq R^i \subseteq R^i_+$, all of the following hold:
\begin{enumerate}[label=(\arabic*)]
  \item $|R^i| = (4/3 + o(1))n^{0.1}$ for all $i = 1, ..., n^{1.1}$.
\item For each $X \subseteq V (H)$, let $Y_X := |\{i : X \subseteq R^i
\}|$, then,
 \begin{enumerate}[label=(\alph*)]
 \item $Y_{\{v\}} \leq (1 + o(1))n^{0.2}$ for $v \in V (H)$,
\item $Y_{\{v\}} = (1 + o(1))n^{0.2}$ for $v \in V (H) \backslash S$,
\item $Y_{\{u,v\}} \leq 2$ for distinct $u, v \in V (H)$, and
\item $Y_e \leq 1$ for $e \in E(H)$.
 \end{enumerate}
%\item For each $X \in \binom{V (H)}{2}$, let $DEG^i_X = |N_H(X) \cap \binom{R^i}{2}|$. If $\rho > 0$ is a constant and
%$d_H(\{u, v\}) \geq \binom{n-1}{2} - \binom{2n/3}{2} - \rho n^2$ for all $u \in Q$ and $v \in P$, then for any constant $\xi \geq 5\rho$, we have
%$$
%DEG^i_{\{u,v\}} > \binom{|R^i \cap P| - 1}{2} - \binom{2|R^i \cap P|/3}{2} - \xi|R^i \cap P|^2,
%$$
%for all $i = 1, \dots, n^{1.1}$, $u \in Q$, and $v\in  P$.
\end{enumerate}
\end{lemma}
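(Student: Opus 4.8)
The plan is to reduce every one of the four estimates to a single concentration inequality about the \emph{extreme} sets $R^i_+$ and $R^i_-=R^i_+\setminus S$, using the fact that all the relevant quantities are monotone under $R^i_-\subseteq R^i\subseteq R^i_+$. Indeed, for such an $R^i$ we have $|R^i_-|\le|R^i|\le|R^i_+|$, and for every $X\subseteq V(H)$ one has $|\{i:X\subseteq R^i_-\}|\le Y_X\le|\{i:X\subseteq R^i_+\}|$; moreover $|S|=|S\cap Q|+|S\cap P|=\tfrac43 n^{0.99}$ is negligible next to $n^{0.1}$. Hence it suffices to show that, with probability $1-o(1)$, the displayed bounds hold with $R^i$ replaced by $R^i_+$ for all the upper bounds, and by $R^i_-$ (equivalently by $R^i_+$, since the vertices involved lie outside $S$) for the matching lower bounds in (1) and (2)(b). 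Each of these is an event measurable with respect to the family $\{R^i_+\}$ together with the fixed set $S$, so a single good event handles the ``for any sequence $R^i$'' quantifier.

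For (1): fix $i$. Then $|R^i_+|$ is a sum of $|V(H)|=\tfrac43 n$ independent Bernoulli$(n^{-0.9})$ variables, with mean $\tfrac43 n^{0.1}$, while $|R^i_+\cap S|$ is a sum of $\tfrac43 n^{0.99}$ such variables with mean $\tfrac43 n^{0.09}=o(n^{0.1})$. Applying Lemma \ref{cher} with $\eta=n^{-0.01}$ to $|R^i_+|$ and with a large deviation ($\eta\ge1$) to $|R^i_+\cap S|$, each fails with probability at most $e^{-\Omega(n^{0.08})}$, so a union bound over the $n^{1.1}$ copies keeps the total failure probability $o(1)$. On the good event, $|R^i_+|=(\tfrac43+o(1))n^{0.1}$ and $|R^i_-|=|R^i_+|-|R^i_+\cap S|=(\tfrac43+o(1))n^{0.1}$ for all $i$, which sandwiches every admissible $|R^i|$ and gives (1). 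For (2)(a)--(b): fix $v\in V(H)$; then $|\{i:v\in R^i_+\}|$ is a sum of $n^{1.1}$ independent Bernoulli$(n^{-0.9})$ variables with mean $n^{0.2}$, so Lemma \ref{cher} with $\eta=n^{-0.01}$ gives $(1+o(1))n^{0.2}$ except with probability $e^{-\Omega(n^{0.18})}$; a union bound over the $\le\tfrac43 n$ vertices suffices. Since $Y_{\{v\}}\le|\{i:v\in R^i_+\}|$ always, and $Y_{\{v\}}=|\{i:v\in R^i_+\}|$ when $v\notin S$ (as then $v\in R^i_-\iff v\in R^i_+$), this yields (2)(a) and (2)(b).

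For (2)(c): for a fixed pair $\{u,v\}$ we have $\mathbb{P}[\{u,v\}\subseteq R^i_+]=n^{-1.8}$, so the probability that three of the $n^{1.1}$ copies all contain $\{u,v\}$ is at most $\binom{n^{1.1}}{3}n^{-5.4}=O(n^{-2.1})$; a union bound over the $O(n^2)$ pairs gives $o(1)$, and since $Y_{\{u,v\}}\le|\{i:\{u,v\}\subseteq R^i_+\}|$ we obtain $Y_{\{u,v\}}\le2$ for all pairs. For (2)(d): for a fixed $e\in E(H)$, $\mathbb{P}[e\subseteq R^i_+]=n^{-3.6}$, so the probability that two copies both contain $e$ is at most $\binom{n^{1.1}}{2}n^{-7.2}=O(n^{-5})$; a union bound over the $\le\binom{n}{4}$ edges gives $o(1)$, and $Y_e\le|\{i:e\subseteq R^i_+\}|\le1$ follows. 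Intersecting the boundedly many good events produced above completes the proof.

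The argument is entirely routine; the only points requiring care are the uniformity over the (exponentially many) admissible choices of the $R^i$, which is handled up front by the monotone reduction to quantities that depend only on $R^i_\pm$, and the bookkeeping that every tail probability decays fast enough to survive its union bound --- over $n^{1.1}$ copies in (1) and (2)(a)--(b), and over up to $\binom{n}{4}$ subsets in (2)(c)--(d). The exponents in the hypothesis (sampling probability $n^{-0.9}$ and $n^{1.1}$ copies) are exactly what makes these margins work: the means of order $n^{0.1}$ and $n^{0.2}$ are large enough for Chernoff, while exceeding the thresholds $2$ and $1$ in (2)(c)--(d) is polynomially rare. I do not expect any genuine obstacle beyond this accounting.
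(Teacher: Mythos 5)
Your proof is correct: the reduction to events depending only on the extreme sets $R^i_\pm$ handles the ``for any sequence'' quantifier, and the Chernoff/union-bound calculations for (1), (2)(a)--(b) and the first-moment bounds for (2)(c)--(d) (with failure probabilities $e^{-\Omega(n^{0.08})}$, $e^{-\Omega(n^{0.18})}$, $O(n^{-2.1})$ per pair and $O(n^{-5})$ per edge, respectively) all survive their union bounds with room to spare. The paper does not prove Lemma \ref{41} itself but cites it as Lemma 5.2 of \cite{Lu}, and your argument is exactly the standard sandwich-plus-concentration proof one expects there, so there is nothing substantive to compare beyond noting the agreement.
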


\begin{lemma}\label{42}
Let $n$, $H$, $P$, $Q$, $S$ and $R^i_+$, $R^i_-$, $i \in [n^{1.1}]$, be given as in Lemma \ref{41} and  $u$ and $v$ are adjacent in $H$ for any $u\in Q$, $v\in P$. For each $X \in \binom{V (H)}{2}$, let $DEG^i_X = |N_H(X) \cap \binom{R^i}{2}|$. Then with probability $1 - o(1)$, for any sequence $R^i$, $1 \leq i \leq n^{1.1}$, satisfying $R^i_- \subseteq R^i \subseteq R^i_+$, the following holds:
If $\delta > 0$ is a constant and $d_H(\{u, v\}) + d_H(\{u, v'\}) \geq (\frac{2}{3} + \delta)n^2$ for all $u \in Q$ and $v, v' \in P$ with $\deg_H(\{u, v, v'\}) >0$, then with probability $1 - o(1)$, we have
\begin{enumerate}[label=(\alph*)]
 \item For all $i \in [n^{1.1}]$, $u\in Q$ and $v\in P$, $DEG^i_{\{u,v\}} >0$.
\item For all $i \in [n^{1.1}]$, $u \in Q$ and $v, v' \in P$ with $\deg_{H[R^i]}(\{u, v, v'\}) > 0$,
$$
 DEG^i_{\{u,v\}} + DEG^i_{\{u,v'\}} \geq \frac{2}{3}|R^i\cap P|^2-\frac{8}{3}|R^i\cap P|+2.
 $$

 \end{enumerate}

\end{lemma}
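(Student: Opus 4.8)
The plan is to establish both parts by a second-moment/concentration argument applied copy by copy, exploiting the fact that $R^i$ is essentially a uniform random sample of $V(H)$ of size roughly $\tfrac43 n^{0.1}$, and then taking a union bound over the $n^{1.1}$ copies together with the (bounded) number of relevant pairs. Throughout I condition on the conclusions of Lemma~\ref{41} holding, which happens with probability $1-o(1)$; in particular $|R^i\cap P|=(1+o(1))n^{0.1}$ and $|R^i\cap Q|=(1+o(1))n^{0.1}/3$ for every $i$.

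For part~(a): fix $i$, $u\in Q$ and $v\in P$. Since $u$ and $v$ are adjacent in $H$, we have $\deg_H(\{u,v,v''\})>0$ for some $v''$, and hence by the Ore-type hypothesis $d_H(\{u,v\})+d_H(\{u,v''\})\geq(\tfrac23+\delta)n^2$; as $d_H(\{u,v''\})\leq\binom{n}{2}<\tfrac12 n^2$, this forces $d_H(\{u,v\})\geq(\tfrac16+\delta)n^2=\Omega(n^2)$. In other words, $N_H(\{u,v\})$ is a set of pairs in $P$ of size $\Omega(n^2)$. Now $DEG^i_{\{u,v\}}=|N_H(\{u,v\})\cap\binom{R^i}{2}|$, and each pair $\{v_j,v_k\}\in N_H(\{u,v\})$ with $v_j,v_k\notin S$ lies in $\binom{R^i_-}{2}\subseteq\binom{R^i}{2}$ with probability $\Theta(n^{-1.8})$ (each of the two vertices is sampled independently with probability $n^{-0.9}$), so $\mathbb{E}[|N_H(\{u,v\})\cap\binom{R^i_-}{2}|]=\Omega(n^2\cdot n^{-1.8})=\Omega(n^{0.2})$, which tends to infinity. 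A Chernoff/Janson-type tail bound (the pairs are not independent, but one can use the standard inequality for sums of $\{0,1\}$ variables with limited dependence, or simply bound the lower tail via the second moment together with the fact that the variance is dominated by the mean up to lower-order terms from pairs sharing a vertex) gives that $|N_H(\{u,v\})\cap\binom{R^i_-}{2}|>0$ fails with probability at most $e^{-\Omega(n^{0.2})}$, which beats the union bound over the at most $n^{1.1}\cdot(n/3)\cdot n$ triples $(i,u,v)$. Since $R^i_-\subseteq R^i$, this yields $DEG^i_{\{u,v\}}>0$ for all such triples simultaneously, with probability $1-o(1)$.

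For part~(b): fix $i$, $u\in Q$ and $v,v'\in P$ with $\deg_{H[R^i]}(\{u,v,v'\})>0$; in particular $u,v,v'\in R^i$, $\deg_H(\{u,v,v'\})>0$, and hence $d_H(\{u,v\})+d_H(\{u,v'\})\geq(\tfrac23+\delta)n^2$ by hypothesis. Consider the sum $Z:=DEG^i_{\{u,v\}}+DEG^i_{\{u,v'\}}=\sum_{\{v_j,v_k\}\in N_H(\{u,v\})}\mathbf{1}[\{v_j,v_k\}\subseteq R^i]+\sum_{\{v_j,v_k\}\in N_H(\{u,v'\})}\mathbf{1}[\{v_j,v_k\}\subseteq R^i]$. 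The key point is that the target inequality reads $Z\geq\tfrac23|R^i\cap P|^2-\tfrac83|R^i\cap P|+2=(\tfrac23+o(1))n^{0.2}$ (using $|R^i\cap P|=(1+o(1))n^{0.1}$), whereas the hypothesis controls $d_H(\{u,v\})+d_H(\{u,v'\})\geq(\tfrac23+\delta)n^2$, i.e. a sum over pairs in $P$ of total size at least $(\tfrac23+\delta)n^2$. Restricting to pairs avoiding $S$ loses only $O(n^{0.99}\cdot n)=o(n^2)$, and for such a pair the probability it lies in $\binom{R^i_-}{2}$ is $\big(n^{-0.9}\big)^2=n^{-1.8}$; hence $\mathbb{E}[Z']\geq(\tfrac23+\delta-o(1))n^2\cdot n^{-1.8}=(\tfrac23+\delta-o(1))n^{0.2}$ where $Z'$ is the analogous sum over $R^i_-$ only, and $Z\geq Z'$. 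Since $(\tfrac23+\delta-o(1))n^{0.2}$ comfortably exceeds $(\tfrac23+o(1))n^{0.2}$, it suffices to show $Z'\geq(1-\eta)\mathbb{E}[Z']$ for a small fixed $\eta<\tfrac{3\delta}{2}$, say, with probability $1-e^{-\Omega(n^{0.2})}$. Again $\mathbb{E}[Z']\to\infty$, so a concentration bound for sums of indicators with pairwise-bounded dependence (each pair shares a vertex with at most $O(n)$ others, so the dependency graph has bounded-in-$n^{0.2}$-scaled degree after the $n^{-1.8}$ weighting) gives the required exponential lower-tail estimate; union-bounding over the at most $n^{1.1}\cdot(n/3)\cdot n^2$ relevant tuples $(i,u,v,v')$ still gives $o(1)$.

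The main obstacle is the dependence among the indicator variables $\mathbf{1}[\{v_j,v_k\}\subseteq R^i]$: they are not independent because pairs sharing a vertex are positively correlated, so a naive Chernoff bound does not apply directly. The clean way around this is to note that the dependency structure is sparse — each pair is correlated with only $O(n)$ others, and after multiplying by the sampling probability $n^{-1.8}$ the relevant quantities are of order $n^{0.2}$ with variance of the same order — so one can invoke either Janson's inequality, or Suen's inequality, or a bounded-differences argument (McDiarmid) on the $\leq n$ vertex-inclusion indicators, each of which changes $Z'$ by at most $O(n^{-0.9}\cdot n)=O(n^{0.1})$ on a single coordinate flip; a careful application of McDiarmid with these bounded differences gives a tail of order $\exp(-\Omega(n^{0.2}/n^{0.1}\cdot n^{0.1}))$-type after optimizing, which is enough, though one should be slightly careful to use the variance-sensitive form. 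I would present the argument using the same machinery as in \cite{Lu}, since Lemma~\ref{41} is quoted from there and the concentration technology needed here is of the same flavor; the rest is routine bookkeeping with the $o(1)$ error terms coming from $|R^i\cap P|=(1+o(1))n^{0.1}$ and from discarding pairs meeting $S$.
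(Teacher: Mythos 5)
Your proposal matches the paper's proof in all essentials: both reduce to the sets $R^i_-$, lower-bound $d_H(\{u,v\})\ge(\tfrac23+\delta)n^2-\binom{n}{2}=\Omega(n^2)$ to get $\mathbb{E}[\deg^i_{\{u,v\}}]=\Omega(n^{0.2})$, apply Janson's inequality for the lower tail, union-bound over the polynomially many tuples, and compare $(\tfrac23+\delta-o(1))n^{0.2}$ against the target $(\tfrac23+o(1))n^{0.2}$. Of the concentration tools you list, Janson is the one the paper uses and the one that cleanly gives the needed $e^{-\Omega(n^{0.1})}$ tail (the plain bounded-differences bound over $n$ vertex indicators would be too weak, as you half-acknowledge), but this is a matter of presentation rather than a gap.
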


\begin{proof}
We have
$$
\mathbb{E}(|R^i_-\cap Q|) = \Big(\frac{n}{3} -\frac{n^{0.99}}{3}\Big)n^{-0.9}= \frac{n^{0.1}}{3} -\frac{n^{0.09}}{3}
$$
and
$$
\mathbb{E}(|R^i_-\cap P|) = (n -n^{0.99})n^{-0.9}= n^{0.1} -n^{0.09}.
$$
By Lemma \ref{cher}, we have
$$
\mathbb{P}\Big(|R^i_-\cap Q|-\Big(\frac{n^{0.1}}{3}-\frac{n^{0.09}}{3}\Big)\geq n^{0.095}\Big)\leq e^{-\Omega(n^{0.09})}
$$
and
$$
\mathbb{P}\Big(|R^i_-\cap P|-(n^{0.1}-n^{0.09})\geq n^{0.095}\Big)\leq e^{-\Omega(n^{0.09})}.
$$
     Let $\deg^i_X = |N_H(X) \cap \binom{R^i_-}{2}|$.
    For any $u\in Q$ and $v\in P$, since $d_H(\{u, v\}) \geq (\frac{2}{3} + \delta)n^2 -\binom{n}{2} = \Omega(n^2)$, we have
    $$
    \mu :=
    \mathbb{E}(\deg^i_{\{u, v\}}) =
    d_{H-S}(\{u,v\})(n^{-0.9})^2 \geq
    (1 - o(1))d_H(\{u,v\})(n^{-0.9})^2 =
    \Omega(n^{0.2}).
    $$
    We use Janson's inequality (Theorem 8.7.2 in \cite{Alon2}) to bound the deviation of $\deg^i_{\{u, v\}}$. We write $\deg^i_{\{u, v\}} = \sum_{e\in N_H(\{u,v\})}X_e$, where $X_e=1$ if $e\subseteq R^i_-$ and $X_e=0$ otherwise. Then
    $$
    \Delta=
    \sum_{e\cap f \neq \emptyset}\mathbb{P}(X_e=X_f=1)\leq
    \binom{n-1}{2}\binom{2}{1}\binom{n-3}{1}(n^{-0.9})^3 =
    \Omega(n^{0.3}).
    $$
    By Janson's inequality, for any $\gamma>0$, we have
    $$
    \mathbb{P}\Big(\deg^i_{\{u, v\}}\leq (1-\gamma)\mathbb{E}(\deg^i_{\{u, v\}})\Big) \leq
    e^{\frac{-\gamma^2\mu}{2+\Delta/\mu}}\leq
    e^{-\Omega(n^{0.1})}.
    $$
    With probability at least $1 - n^{2+1.1}e^{-\Omega(n^{0.1})}$, for all $u\in Q$, $v\in P$ and all $i\in [n^{1.1}]$, we have
    $$
    \deg^i_{\{u, v\}} >
    (1-\gamma)\mathbb{E}(\deg^i_{\{u, v\}}) \geq
    (1-\gamma)(1 - o(1))d_H(\{u,v\})(n^{-0.9})^2.
    $$
    Thus, with probability $1-o(1)$, for all $i\in [n^{1.1}]$, $u \in Q$ and $v \in P$, we have
    $$
    DEG^i_{\{u,v\}} \geq
    \deg^i_{\{u,v\}} >0,
    $$
    and for all $i\in [n^{1.1}]$, $u \in Q$ and $v, v' \in P$ with $\deg_{H[R^i]}(\{u, v, v'\}) >0$, we have
    \begin{align*}
DEG^i_{\{u,v\}} + DEG^i_{\{u,v'\}}  & \geq deg^i_{\{u,v\}} + \deg^i_{\{u,v'\}} \\
& \geq (1-\gamma)(1 - o(1))n^{-1.8}(d_H(\{u,v\})+d_H(\{u,v'\})\\
&  \geq \frac{2}{3}(1-\gamma)(1 - o(1))\Big(1+\frac{3}{2}\delta\Big)n^{0.2} \\
&       \geq  \frac{2}{3}|R^i\cap P|^2-\frac{8}{3}|R^i\cap P|+2,
\end{align*}
where the third inequality holds because $\deg_{H[R]}(\{u, v, v'\}) >0$, which follows from  $\deg_{H[R^i]}(\{u, v, v'\}) >0$, and the fourth inequality holds since $\gamma \ll \delta$.
\end{proof}

\begin{lemma}\label{43}(Lemma 5.3 in \cite{Lu})
    Let $n$, $H$, $P$, $Q$, $S$ and $R^i_+$, $R^i_-$, $i \in [n^{1.1}]$, be given as in Lemma \ref{41}. Then with probability $1-o(1)$, for every $i\in [n^{1.1}]$, there exist subgraphs $R^i$ such that $R^i_- \subseteq R^i \subseteq R^i_+$ and $R^i$ is balanced.
\end{lemma}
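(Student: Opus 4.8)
The plan is to prove Lemma~\ref{43} by a short first-moment and concentration argument, using crucially that both $V(H)$ and $S$ are balanced, so that the ``imbalance'' of each $R^i_-$ has mean zero and is negligible next to the number of vertices of $S$ that fall inside $R^i_+$. Concretely, I would put $A_i:=|R^i_+\cap S\cap Q|$, $B_i:=|R^i_+\cap S\cap P|$, and $\Delta_i:=3|R^i_-\cap Q|-|R^i_-\cap P|$. Since each vertex is kept with probability $n^{-0.9}$, we have $\mathbb{E}(A_i)=(n^{0.99}/3)n^{-0.9}=n^{0.09}/3$, $\mathbb{E}(B_i)=n^{0.99}\cdot n^{-0.9}=n^{0.09}$, and $\mathbb{E}(3|R^i_-\cap Q|)=\mathbb{E}(|R^i_-\cap P|)=(n-n^{0.99})n^{-0.9}$, so $\mathbb{E}(\Delta_i)=0$. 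Applying the Chernoff bound (Lemma~\ref{cher}) to $A_i$, $B_i$, $|R^i_-\cap Q|$ and $|R^i_-\cap P|$, and taking a union bound over $i\in[n^{1.1}]$ (each bad event has probability at most $e^{-n^{\Omega(1)}}$, which absorbs the polynomial factor $n^{1.1}$), with probability $1-o(1)$ we have, simultaneously for all $i$, that $A_i\ge n^{0.08}$, $B_i\ge n^{0.08}$, and $|\Delta_i|\le n^{0.07}$.

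Next I would complete each $R^i_-$ to a balanced set on this event. Since $R^i_-=R^i_+\setminus S$, the vertices available for enlarging $R^i_-$ inside $R^i_+$ are exactly those of $R^i_+\cap S$, namely $A_i$ of them in $Q$ and $B_i$ in $P$. Hence it suffices to find integers $a,b$ with $0\le a\le A_i$, $0\le b\le B_i$ and $3(|R^i_-\cap Q|+a)=|R^i_-\cap P|+b$, i.e.\ $b-3a=\Delta_i$. If $\Delta_i\ge 0$, take $a=0$ and $b=\Delta_i$; this is admissible since $0\le\Delta_i\le n^{0.07}\le B_i$. If $\Delta_i<0$, take $a=\lceil-\Delta_i/3\rceil$ and $b=3a+\Delta_i\in\{0,1,2\}$; then $0\le a\le(-\Delta_i)/3+1\le n^{0.08}\le A_i$ and $0\le b\le 2\le B_i$, so this too is admissible. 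In either case, adjoining to $R^i_-$ any $a$ vertices of $R^i_+\cap S\cap Q$ and any $b$ vertices of $R^i_+\cap S\cap P$ yields a balanced $R^i$ with $R^i_-\subseteq R^i\subseteq R^i_+$, as required; note that the completions for different $i$ are chosen independently, so the quantifier order in the statement causes no difficulty.

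I do not expect a genuine obstacle here. The only care needed is the bookkeeping in the concentration step — making sure the deviation bound on $\Delta_i$ leaves room below $A_i$ and $B_i$, and that the union bound over all $n^{1.1}$ indices is affordable — together with the minor divisibility subtlety when $\Delta_i<0$ is not a multiple of $3$, handled above by rounding $-\Delta_i/3$ up and absorbing the remainder, which has size at most $2$, into $b$. Since Lemma~\ref{43} is stated as Lemma~5.3 of~\cite{Lu}, in the final write-up I would simply cite that reference, but the argument above is the one I would give.
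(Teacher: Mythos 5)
Your argument is correct: since $V(H)$ and $S$ are both balanced, $\mathbb{E}(\Delta_i)=0$, Chernoff plus a union bound over the $n^{1.1}$ indices gives $|\Delta_i|\le n^{0.07}$ while $|R^i_+\cap S\cap Q|,|R^i_+\cap S\cap P|\ge n^{0.08}$ simultaneously for all $i$, and your case analysis (including the rounding when $-\Delta_i$ is not divisible by $3$) correctly repairs the imbalance using vertices of $R^i_+\cap S$. The paper does not prove this lemma itself but quotes it as Lemma~5.3 of \cite{Lu}, and your proof is essentially the same balancing argument given there, so citing \cite{Lu} as you propose is appropriate.
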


\begin{lemma}\label{44}(Lemma 5.4 in \cite{Lu})
    Let $n$, $H$, $P$, $Q$, $S$ and $R^i$, $i \in [n^{1.1}]$, be given as in Lemma \ref{43} such that each $H[R^i]$ is a balanced $(1, 3)$-partite $4$-graph and has a fractional perfect matching $w_i$. Then there exists a spanning subgraph $H''$ of $H' := \cup_{i=1}^{n^{1.1}} H[R^i]$ such that
    \begin{enumerate}[label=(\arabic*)]
        \item $\deg_{H''} (u) \leq (1 + o(1))n^{0.2}$ for $u \in S$,
        \item $\deg_{H''} (u) = (1 + o(1))n^{0.2}$ for $u \in V(H)\backslash S$,
        \item $\Delta_2(H'') \leq n^{0.1}$.
    \end{enumerate}
\end{lemma}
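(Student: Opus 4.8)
\n{\bf Proof sketch of Lemma \ref{44}.}\quad
The plan is to obtain $H''$ by an independent random sparsification of $H'$ guided by the fractional perfect matchings $w_i$. Concretely, I would take $V(H'') := V(H)$ and, for each $i\in[n^{1.1}]$ and each edge $e\in E(H[R^i])$, place $e$ into $E(H'')$ with probability $w_i(e)$, all these choices mutually independent. The first thing to record is that, by Lemma \ref{41}(2)(d), $Y_e\le 1$ for every $e\in E(H)$, so each edge of $H'$ lies in exactly one $H[R^i]$ and is therefore offered for inclusion precisely once; consequently $\deg_{H''}(v)$ and each pair-degree $\deg_{H''}(\{x,y\})$ are sums of mutually independent $\{0,1\}$-variables, which is exactly what makes the Chernoff bound (Lemma \ref{cher}) applicable.

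Next I would pin down the expected degrees. Since $w_i$ is a fractional perfect matching of the balanced $(1,3)$-partite $4$-graph $H[R^i]$, a short double count (every edge meets $Q$ once and $P$ three times, and $|R^i\cap P|=3|R^i\cap Q|$) forces $\sum_{e\ni v}w_i(e)=1$ for every $v\in R^i$. Hence $\mathbb{E}\big[\deg_{H''}(v)\big]=\sum_{i:\,v\in R^i}\sum_{e\ni v,\,e\in E(H[R^i])}w_i(e)=Y_{\{v\}}$, which by Lemma \ref{41}(2)(a),(b) equals $(1+o(1))n^{0.2}$ when $v\notin S$ and is at most $(1+o(1))n^{0.2}$ when $v\in S$. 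As this expectation has order $n^{0.2}$, applying Chernoff with deviation parameter $\eta=n^{-0.05}$ gives a fluctuation of at most $n^{0.15}=o(n^{0.2})$ except with probability $e^{-\Omega(n^{0.1})}$; a union bound over the $O(n)$ vertices then yields properties (1) and (2) with probability $1-o(1)$.

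For property (3) I would fix a pair $\{x,y\}\in\binom{V(H)}{2}$. By Lemma \ref{41}(2)(c) it lies in at most two of the sets $R^i$, say $R^{i_1}$ and $R^{i_2}$, and for each of these the expected number of selected edges through $\{x,y\}$ is $\sum_{e\supseteq\{x,y\}}w_{i_j}(e)\le\sum_{e\ni x}w_{i_j}(e)=1$; hence $\mathbb{E}\big[\deg_{H''}(\{x,y\})\big]\le 2$. Because the target $n^{0.1}$ dwarfs this mean, the $\eta\ge 1$ branch of Chernoff gives $\mathbb{P}\big[\deg_{H''}(\{x,y\})>n^{0.1}\big]<e^{-\Omega(n^{0.1})}$, and a union bound over the at most $n^2$ pairs forces $\Delta_2(H'')\le n^{0.1}$ with probability $1-o(1)$. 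Intersecting the three $1-o(1)$ events produces a realization of $H''$ with all the required properties.

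The steps are essentially routine once set up, and the only genuinely structural input beyond Lemma \ref{41} is the tightness identity $\sum_{e\ni v}w_i(e)=1$ for fractional perfect matchings of balanced $(1,3)$-partite $4$-graphs. The part needing the most care is the bookkeeping of the concentration thresholds: one must check that the inherent $o(1)$ slack in $Y_{\{v\}}=(1+o(1))n^{0.2}$, together with the Chernoff fluctuation of order $n^{0.15}$, still collapses into the claimed $(1+o(1))n^{0.2}$ in (1)--(2), and that the gap between mean $\le 2$ and target $n^{0.1}$ is wide enough for the large-deviation estimate to survive the $n^2$-term union bound in (3) --- both of which hold with room to spare.
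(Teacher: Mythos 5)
The paper does not prove this lemma itself but imports it verbatim as Lemma 5.4 of \cite{Lu}, and your argument is essentially the proof given there: sparsify $H'$ by keeping each edge $e\in E(H[R^i])$ independently with probability $w_i(e)$ (independence being legitimate because $Y_e\le 1$ makes the $H[R^i]$ edge-disjoint), compute the expectations $\mathbb{E}[\deg_{H''}(v)]=Y_{\{v\}}$ and $\mathbb{E}[\deg_{H''}(\{x,y\})]\le 2$ from the saturation identity $\sum_{e\ni v}w_i(e)=1$ together with Lemma \ref{41}(2), and conclude by Chernoff plus a union bound. This is correct and matches the intended proof; the only minor point to tidy is that for $v\in S$ the mean $Y_{\{v\}}$ may be far below $n^{0.2}$, so the upper bound in (1) should invoke the $\eta\ge 1$ branch of Lemma \ref{cher} rather than the $\eta=n^{-0.05}$ branch.
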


The following lemma is due to Pippenger and Spencer \cite{Pi}, stated as Theorem 4.7.1 in \cite{Alon2}. A cover in a hypergraph $H$ is a set of edges whose union is $V(H)$.
\begin{lemma}\label{45}
    (Pippenger and Spencer, 1989) For every integer $k \geq 2$ and reals $r \geq 1$ and
$a > 0$, there are $\tau = \tau(k, r, a) > 0$ and $d_0 = d_0(k, r, a)$ such that for every $n$ and $D \geq d_0$ the following holds: Every $k$-uniform hypergraph $H = (V, E)$ on a set $V$ of $n$ vertices in which all vertices have positive degrees and which satisfies the following conditions:
\begin{enumerate}[label=(\arabic*)]
        \item For all vertices $x \in V$ but at most $\tau n$ of them, $\deg(x) = (1 \pm \tau)D$,
        \item For all $x \in V$ , $\deg(x) < rD$,
        \item For any two distinct $x, y \in V$, $\deg(\{x, y\}) < \tau D$,
    \end{enumerate}
    contains a cover of at most $(1 + a)(n/k)$ edges.
\end{lemma}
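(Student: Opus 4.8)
The plan is to prove this by the semi-random (``R\"odl nibble'') method, which is essentially the original argument of Pippenger and Spencer, so I will only sketch the bookkeeping. First I would reduce the covering statement to a matching statement: it suffices to find a \emph{matching} $M$ in $H$ that leaves at most $\epsilon n$ vertices uncovered, where $\epsilon=\epsilon(k,a)$ is a small constant with $\epsilon\le a/k$. Indeed $|M|\le n/k$, and since every vertex of $H$ has positive degree, each of the at most $\epsilon n$ uncovered vertices lies in an edge; adding one such edge for each uncovered vertex yields a cover of $H$ with at most $n/k+\epsilon n\le(1+a)(n/k)$ edges.

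To build such a near-perfect matching I would iterate a \emph{nibble}. I maintain an induced subhypergraph $H_t$ on $n_t$ vertices whose degrees are concentrated around a value $D_t$ (with $H_0=H$, $D_0=D$) and which satisfies mildly weakened forms of conditions (1)--(3): all but $o(n)$ of its vertices have degree $(1\pm o(1))D_t$, $\Delta(H_t)\le r'D_t$, and $\deg_{H_t}(\{x,y\})<\tau'D_t$ for all $x\ne y$, where $r',\tau'$ are controlled constants. In round $t$, I pick each edge of $H_t$ independently with probability $p_t=\gamma/D_t$ for a fixed small $\gamma>0$; let $M_t'$ be the set of chosen edges and $M_t\subseteq M_t'$ those chosen edges that meet no other chosen edge. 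Because $\Delta(H_t)\le r'D_t$ and codegrees are $o(D_t)$, a fixed chosen edge conflicts with another chosen edge with probability $O(kr'\gamma)$, so $\mathbb{E}|M_t|=(1-O(\gamma))\gamma n_t/k$ and a fixed vertex of $H_t$ is covered by $M_t$ with probability $(1\pm o_\gamma(1))\gamma$. I then delete the covered vertices and all edges meeting them to obtain $H_{t+1}$, so that $n_{t+1}\approx(1-\gamma)n_t$. Repeating $\Theta(\gamma^{-1}\log(1/\epsilon))$ times leaves fewer than $\epsilon n$ vertices uncovered, and $M:=\bigcup_t M_t$ is the desired matching; note that $D_t$ decays by only a constant factor per round, so it stays above the threshold $d_0(k,r,a)$ throughout provided $D$ was large enough.

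The technical heart, and the step I expect to be the main obstacle, is showing that the pseudorandom hypotheses are approximately self-reproducing: with probability $1-o(1)$, all but $o(n)$ vertices of $H_{t+1}$ have degree $(1\pm o_\gamma(1))D_{t+1}$ with $D_{t+1}\approx(1-\gamma)^{k-1}D_t$, the maximum degree stays below a fixed multiple of $D_{t+1}$, and all codegrees remain $<\tau'D_{t+1}$. The target values come from a first-moment computation (an edge $e\ni v$ survives round $t$ essentially iff none of the other $k-1$ vertices of $e$ is covered, an event of probability $(1-\gamma)^{k-1}+O(\gamma^2)$ once the $O(r'D_t)$-many overlapping edges are accounted for); concentration around these means follows from bounded-differences / Azuma--Hoeffding (or Talagrand) inequalities applied to the independent edge-indicator variables, since toggling one indicator changes each relevant degree and codegree by $O(1)$. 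The only genuine care required is that the slack constants $r'$, $\tau'$ and the $o_\gamma(1)$ errors accumulate over the $\Theta(\gamma^{-1}\log(1/\epsilon))$ rounds; taking $\gamma$ small in terms of $k$ and $a$, and then $\tau$ small in terms of $\gamma$, keeps all parameters within the allowed ranges. This argument is exactly the content of Theorem 4.7.1 in \cite{Alon2}, which I would invoke directly.
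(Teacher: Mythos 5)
The paper does not prove this lemma at all: it is quoted as a black box from Pippenger and Spencer \cite{Pi}, stated as Theorem 4.7.1 in \cite{Alon2}, exactly as you acknowledge at the end of your proposal. Your outline is the standard semi-random (R\"odl nibble) proof of that cited result, and the overall architecture is correct: the reduction from covers to near-perfect matchings using the positive-degree hypothesis, the per-round first-moment computation giving $D_{t+1}\approx(1-\gamma)^{k-1}D_t$, and the bookkeeping of how the slack constants accumulate over $\Theta(\gamma^{-1}\log(1/\epsilon))$ rounds. One caveat should you ever write the concentration step out in full: the claim that toggling a single edge indicator changes each relevant degree by $O(1)$ is not right. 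If a newly chosen edge $f$ becomes part of $M_t$, all vertices of $f$ are deleted, which removes up to $\sum_{u\in f}\deg_{H_t}(\{v,u\})=O(k\tau' D_t)$ edges through a fixed vertex $v$; with this Lipschitz constant, naive Azuma over all $O(nD_t)$ edge indicators gives a deviation far exceeding $D_{t+1}$. The actual argument requires the more delicate local martingale analysis (or a Talagrand-type inequality with a suitable certificate) carried out in \cite{Pi} and \cite{Alon2}. Since you explicitly defer to that reference for the technical heart, this does not undermine your proposal, but it is the one step in your sketch that would fail if taken literally.
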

Note that $H$ contains a cover of at most $(1 + a)(n/k)$ edges, then $H$  must contain a matching of size at least $(1 - (k - 1)a)(n/k)$. Now we are ready
to state and prove the main result of this section, which will be used to find an almost perfect matching after deleting an absorber.

\begin{lemma}\label{46}
    For any $0 < \sigma \ll \delta $, there exists $n_0 \in \mathbb{N}$ such that the following holds for all integers $n\ge n_0$ that are divisible by $3$. Let $H$ be a $(1, 3)$-partite $4$-graph with partition classes $Q$, $P$ such that $|P| = n$ and $|Q| = n/3$ and  every vertex in $ Q$ is adjacent to every vertex in $P$ in $H$. If for any $u_i\in Q$ and $v_j,v_k\in P$ with $\deg(\{u_i,v_j,v_k\}) > 0$,  $\deg(\{u_i,v_j\}) + \deg(\{u_i,v_k\}) \geq  (2/3+\delta/2)n^2$, then $H$ has a matching covering all but at most $\sigma n$ vertices.
\end{lemma}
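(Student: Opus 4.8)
The plan is to combine the random-slicing machinery of Lemmas \ref{41}--\ref{44} with the Pippenger--Spencer nibble (Lemma \ref{45}), exactly in the spirit of \cite{Lu}. First I would fix the auxiliary set $S\subseteq V(H)$ with $|S\cap Q|=n^{0.99}/3$ and $|S\cap P|=n^{0.99}$ required by Lemma \ref{41}, and sample the $n^{1.1}$ independent random vertex subsets $R^i_+$, with $R^i_-=R^i_+\setminus S$, as in that lemma. Applying Lemmas \ref{41}, \ref{42} (with the hypothesis degree-sum lower bound $(2/3+\delta/2)n^2$, which is of the form $(2/3+\delta')n^2$ with $\delta'=\delta/2>0$, so Lemma \ref{42} applies after renaming $\delta$), and \ref{43} simultaneously, with probability $1-o(1)$ there is a choice of balanced $R^i$ with $R^i_-\subseteq R^i\subseteq R^i_+$ for every $i$ such that: each $H[R^i]$ is a balanced $(1,3)$-partite $4$-graph; every $u\in R^i\cap Q$ is adjacent to every $v\in R^i\cap P$ in $H[R^i]$ (this is part (a) of Lemma \ref{42}); and for every $u\in R^i\cap Q$, $v,v'\in R^i\cap P$ with $\deg_{H[R^i]}(\{u,v,v'\})>0$ we have
\[
\deg_{H[R^i]}(\{u,v\})+\deg_{H[R^i]}(\{u,v'\}) \geq \tfrac{2}{3}|R^i\cap P|^2-\tfrac{8}{3}|R^i\cap P|+2 .
\]
Since $|R^i\cap P| = (1+o(1))n^{0.1}$ is divisible-by-$3$ up to the balanced condition and is sufficiently large, Lemma \ref{f} (the fractional version) applies to each $H[R^i]$, so each $H[R^i]$ has a fractional perfect matching $w_i$.

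Next I would feed these data into Lemma \ref{44}: it produces a spanning subgraph $H''$ of $H'=\bigcup_{i=1}^{n^{1.1}}H[R^i]$ with $\deg_{H''}(u)=(1+o(1))n^{0.2}$ for all $u\in V(H)\setminus S$, $\deg_{H''}(u)\le (1+o(1))n^{0.2}$ for $u\in S$, and $\Delta_2(H'')\le n^{0.1}$. Now set $D=n^{0.2}$, $k=4$, pick $r=2$, and choose $a$ small enough in terms of $\sigma$ (concretely $a<\sigma/3$ so that $(k-1)a=3a<\sigma$); then $H''$ satisfies hypotheses (1)--(3) of Lemma \ref{45} for all large $n$, because all but $|S|=n^{0.99}+n^{0.99}/3 = o(n)<\tau n$ vertices have degree $(1\pm\tau)D$, every degree is below $2D<rD$, and $\Delta_2(H'')\le n^{0.1}=\tau D$ once $n$ is large (note $\tau$ depends only on $k,r,a$). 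Hence $H''$, and therefore $H$, contains a matching of size at least $(1-3a)(n/4) > (1-\sigma)(n/4)$, i.e.\ a matching covering all but at most $\sigma n$ vertices. Strictly, $H''$ may have a few isolated vertices coming from $V(H)\setminus\bigcup_i R^i$, so I would apply Lemma \ref{45} to the subhypergraph spanned by the non-isolated vertices (whose complement has size $o(n)$, absorbed into the $\sigma n$ slack); this is a standard cosmetic adjustment.

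The main obstacle — and the only place real work is needed beyond citing the lemmas — is verifying that the inputs required by Lemma \ref{44} genuinely hold for our $H$: namely that with probability $1-o(1)$ one can \emph{simultaneously} choose the $R^i$ so that each $H[R^i]$ is balanced \emph{and} inherits a good enough degree-sum condition to admit a fractional perfect matching. The balanced-ness is Lemma \ref{43}, but it must be made compatible with the degree-sum transfer of Lemma \ref{42}; the point is that both conclusions hold with probability $1-o(1)$ over the same sample of $R^i_\pm$, so a union bound lets us pick one realization satisfying all of Lemmas \ref{41}, \ref{42}, \ref{43} at once, and then a single balanced $R^i$ between $R^i_-$ and $R^i_+$ works for all the stated properties because they are monotone in the obvious direction (degree sums only increase when vertices are added, adjacency is preserved, and Lemma \ref{43} explicitly furnishes a balanced intermediate set). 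Once that compatibility is in hand, the rest is bookkeeping with the quantifiers $0<\sigma\ll\delta$ and routine size estimates.
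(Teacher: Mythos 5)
Your proposal is correct and follows essentially the same route as the paper's proof: choose the $R^i$ satisfying Lemmas \ref{41}--\ref{43} simultaneously, use Lemma \ref{42} to transfer the adjacency and degree-sum conditions to each $H[R^i]$ so that Lemma \ref{f} yields fractional perfect matchings, then apply Lemma \ref{44} followed by Pippenger--Spencer and greedy deletion of intersecting cover edges. The only (harmless) slip is the constant bookkeeping at the end — with $|V(H'')|=4n/3$ the uncovered set has size about $4an$, so one needs $a<\sigma/4$ rather than $\sigma/3$, which is immaterial since $a\ll\sigma$.
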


\begin{proof}
    Let $\{R^i\mid i\in [n^{1.1}]\}$ be subgraphs of $H$ satisfying Lemmas \ref{41}, \ref{42} and \ref{43}. Then for any $u \in R^i \cap Q$ and $v \in R^i \cap P$, we have
    $ \deg_{H[R^i]}(\{u,v\})= DEG^i_{\{u,v\}} >0.$ Additionally, for all $u \in R^i \cap Q$ and $v,v' \in R^i \cap P$ with  $ \deg_{H[R^i]}(\{u,v,v'\}) >0$, the following holds:
    $$\deg_{H[R^i]}(\{u,v\})+\deg_{H[R^i]}(\{u,v'\})=
    DEG^i_{\{u,v\}}+DEG^i_{\{u,v'\}} \geq
    \frac{2}{3}|R^i\cap P|^2-\frac{8}{3}|R^i\cap P|+2.
    $$
    Therefore, by Lemma \ref{f}, each $R^i$ has a fractional perfect matching for all $i\in [n^{1.1}]$.

    By Lemma \ref{44}, there exists a spanning hypergraph $H''$ of $\bigcup\limits_{i=1}^{n^{1.1}} H[R^i]$ such that $d_{H''}(u) \leq (1+o(1))n^{0.2}$ for all $u \in S$, $d_{H''}(v) = (1+o(1))n^{0.2}$ for all $v \in V(H'')\backslash S$ and $\Delta_2(H'')\leq n^{0.1}$. Hence, by Lemma \ref{45} (with $D= n^{0.2}$), $H''$ contains a edge cover of size at most $(1+a)n/3$, where $0< a \ll \sigma$.

    Thus we greedily delete intersecting edges in the edge cover and obtain a matching of size at least $(1-3a)n/3$.
\end{proof}

\section{Proof of Theorem \ref{theorem2}}
Let $H=H_{1,3}(\mathscr{F})$. By Lemma \ref{ab}, $H_{1,3}(\mathscr{F})$ has a matching $M'$ that $|M'| < \rho n$, and for any balanced $S \subseteq V(H)$ with $|S| \leq \rho' n$, $H[S\cup V(M')]$ has a perfect matching. Let $H_1$ be the hypergraph induced by $V(H)\backslash V(M')$. We have the following inequality for any $u\in V(H_1)\cap Q$ and $v, v' \in V(H_1)\cap P$ with $\deg_{H_1}(\{u,v,v'\})>0$:  $$\deg_{H_1}(u,v) +\deg_{H_1}(u,v') \geq \Big(\frac{2}{3}+\frac{\delta}{2}\Big)n'^2,$$ where $n':=|V(H_1)|=(1-o(1))n$ and $ \delta \gg \rho$. For any $u\in V(H_1)\cap Q $ and $v \in V(H_1)\cap P$, we also have $$\deg_{H_1}(u,v) \geq \Big(\frac{2}{3}+\frac{\delta }{2}\Big)n'^2-\binom{n'}{2} \geq \frac{1}{6}n'^2,$$ it follows that  every vertex of $V(H_1)\cap Q$ is adjacent to every vertex of $V(H_1)\cap P$ in $H_1$. By Lemma \ref{46}, $H_1$ contains a matching $M_1$ covering all but at most $\sigma n$ vertices. Since  $\sigma \ll \rho'$, the matching $M'$ absorbs the set $V(H_1)\backslash V(M_1)$, forming a matching $M_2$. Therefore $M_1 \cup M_2$ is a perfect matching of $H$.

\end{document}